\def\sD{{\mathfrak D}}      
   \def\sH{{\mathfrak H}}
      \def\dR{{\mathbb R}}
\def\wt#1{{{\widetilde #1} }}
\def\bm\chi{\mbox{\boldmath$\chi$}}
\def\ker{{\rm ker\,}}
\def\ran{{\rm ran\,}}
\def\cran{{\rm \overline{ran}\,}}
\def\dom{{\rm dom\,}}
\def\clos{{\rm clos\,}}
\let\xker=\ker \def\ker{{\xker\,}}
\def\sign{{\rm sign\,}}
\def\uphar{{\upharpoonright\,}}
\newcommand\tA{\widetilde{A}}
\def\Ext{{\rm Ext\,}}
\newcommand {\sk}[3]{\left#1#2\right#3}  
\renewcommand {\k}{\kappa}
\newtheorem{theorem}{Theorem}[section]
\newtheorem{lemma}[theorem]{Lemma}
\theoremstyle{definition}
\newtheorem{definition}[theorem]{Definition}
\numberwithin{equation}{section}
\begin{document}

\title[Completion and extension of operators in Kre\u{\i}n spaces.]
{Completion of operators in Kre\u{\i}n spaces
}
\author{D.~Baidiuk}
\date{08.02.2016}
\address{Department of Mathematics and Statistics \\
University of Vaasa \\
P.O. Box 700, 65101 Vaasa \\
Finland} \email{dbaidiuk@uwasa.fi}
%
%



\keywords{Completion, extension of operators, Kre\u{\i}n and
Pontryagin spaces.}

\subjclass[2010]{Primary 46C20, 47A20, 47A63; Secondary 47B25}

\begin{abstract}
A generalization of the well-known results of M.G. Kre\u{\i}n about the description of selfadjoint contractive extension of a hermitian contraction is obtained. This generalization concerns the situation, where the selfadjoint operator $A$ and extensions $\wt A$ belong to a Kre\u{\i}n space or a Pontryagin space  and their defect operators are allowed to have a fixed number of negative eigenvalues. Also a result of Yu.L. Shmul'yan on completions of nonnegative block operators is generalized for block operators with a fixed number of negative eigenvalues in a Kre\u{\i}n space.

This paper is a natural continuation of S. Hassi's and author's paper \cite{BH2015}.
\end{abstract}

\maketitle

\section{Introduction}
In 1947 M.G.~Kre\u{\i}n published one of his famous papers
\cite{Kr47} on a description of a nonnegative selfadjoint extensions of a densely defined nonnegative
operator $A$ in a Hilbert space. Namely, all nonnegative selfadjoint
extensions $\wt A$ of $A$ can be characterized by the following two
inequalities:
\[
 (A_F+a)^{-1}\le (\tA+a)^{-1}\le (A_K+a)^{-1},
 \quad a>0,
\]
where the Friedrichs (hard) extension $A_F$ and the Kre\u{\i}n-von Neumann
(soft) extension $A_K$ of $A$. He proved these results by transforming the problems the study of contractive operators.

The first result of the present paper is a generalization of a
result due to Shmul'yan \cite{S59} on completions of nonnegative
block operators where the result was applied for introducing
so-called Hellinger operator integrals.  This result was extended in
 \cite{BH2015} for block operators in a Hilbert space by allowing a fixed number
of negative eigenvalues. In Section 2 this result
is further extended to block operators which act in a Kre\u{\i}n space.

In paper \cite{BH2015} we studied classes of ``quasi-contractive''
symmetric operators $T_1$ allowing a finite number of negative
eigenvalues for  the associated defect operator $I-T_1^*T_1$, i.e.,
$\nu_-(I-T_1^*T_1)<\infty$ as well as ``quasi-nonnegative''
operators $A$ with $\nu_-(A)<\infty$ and the existence and
description of all possible selfadjoint extensions $T$ and $\wt A$
of them which preserve the given negative indices
$\nu_-(I-T^2)=\nu_-(I-T_1^*T_1)$ and $\nu_-(\wt A)=\nu_-(A)$, and
proved precise analogs of the above mentioned results of
M.G.~Kre\u{\i}n under a minimality condition on the negative indices
$\nu_-(I-T_1^*T_1)$ and $\nu_-(A)$, respectively. It was an
unexpected fact that when there is a solution then the solution set
still contains a minimal solution and a maximal solution which then
describe the whole solution set via two operator inequalities, just
as in the original paper of M.G.~Kre\u{\i}n. In this paper analogous
results are established for "quasi-contractive" operators acting in
a Kre\u{\i}n space; see Theorems \ref{T:contr2}, \ref{T:contr}.

In Section 4 a first Kre\u{\i}n space analog of completion problem
is formulated and a description of its solutions is found. Namely,
we consider classes of "quasi-contractive" symmetric operators $T_1$
in a Kre\u{\i}n space  with $\nu_-(I-T_1^*T_1)<\infty$ and we
describe all possible selfadjoint (in the Kre\u{\i}n space sense)
extensions $T$ of $T_1$ which preserve the given negative index
$\nu_-(I-T^*T)=\nu_-(I-T_1^*T_1)$. This problem is close to the
completion problem studied in \cite{BH2015} and has a similar
description for its solutions. For further history behind this
problem see also \cite{ACS2006, AG82, ACG87, CG89, CG92,DaKaWe,
Drit90, DritRov90,HMS04, KM1, Kr44, Kr46, ShYa}.

The main result of the present paper is proved in Section 5.
Namely, we consider classes of
"quasi-contractive" symmetric
operators $T_1$ in a Kre\u{\i}n space $(\sH,J)$ with
\begin{equation}\label{indform}
\nu_-[I-T_1^{[*]}T_1]:=\nu_-(J(I-T^{[*]}_1T_1))<\infty
\end{equation}
and we establish a solvability criterion and a description of all
possible selfadjoint extensions $T$ of $T_1$ (in the Kre\u{\i}n
space sense) which preserve the given negative index
$\nu_-[I-T^{[*]}T]=\nu_-[I-T_1^{[*]}T_1]$. It should be pointed out
that in this more general setting the descriptions involve so-called
link operator $L_T$ which was introduced by Arsene, Constantintscu
and Gheondea in \cite{ACG87} (see also \cite{AG82, CG89, CG92,
LT87}).

\section{A completion problem for block operators in Kre\u{\i}n spaces}
By definition the modulus $|C|$ of a closed operator $C$ is the
nonnegative selfadjoint operator $|C|=(C^*C)^{1/2}$. Every closed
operator admits a polar decomposition $C=U|C|$, where $U$ is a
(unique) partial isometry with the initial space $\cran |C|$ and the
final space $\cran C$, cf. \cite{Kato}. For a selfadjoint operator
$H=\int_{\dR} t\, dE_t$ in a Hilbert space $\sH$ the partial
isometry $U$ can be identified with the signature operator, which
can be taken to be unitary: $J=\sign(H)=\int_{\dR}\,\sign(t)\,dE_t$,
in which case one should define $\sign(t)=1$ if $t\ge 0$ and
otherwise $\sign(t)=-1$.

Let $\mathcal{H}$ be a Hilbert space, and let $J_{\mathcal{H}}$ be
 a signature operator in it, i.e.,
$J_{\mathcal{H}}=J_{\mathcal{H}}^*=J_{\mathcal{H}}^{-1}$.
 We
interpret the space $\mathcal{H}$ as a Kre\u{\i}n space
$(\mathcal{H},J_{\mathcal{H}})$ (see \cite{AIbook, Bognarbook}) in
which the indefinite scalar product is defined by the equality
$$
[\varphi,
\psi]_{\mathcal{H}}=
 (J_{\mathcal{H}} \varphi, \psi)_{\mathcal{H}}.
$$
Let us introduce a partial ordering for selfadjoint Kre\u{\i}n space operators.
For selfadjoint operators $A$ and $B$ with the
same domains $A\ge_JB$ if and only if $[(A-B)f,f]\ge 0$ for all
$f\in\dom A$. If not otherwise indicated the word "smallest" means the
smallest operator in the sense of this partial ordering.

Consider a bounded incomplete block operator
\begin{equation}\label{A0}
 A^0=
  \begin{pmatrix}
   A_{11}&A_{12}\\
   A_{21}&\ast
  \end{pmatrix}
  \begin{pmatrix}
   (\sH_1,J_1)\\
   (\sH_2,J_2)
  \end{pmatrix}
  \to
  \begin{pmatrix}
   (\sH_1,J_1)\\
   (\sH_2,J_2)
  \end{pmatrix}
\end{equation}
in the Kre\u{\i}n space $\sH=(\sH_1\oplus\sH_2,J)$, where
$(\sH_1,J_1)$ and $(\sH_2,J_2)$ are
  Kre\u{\i}n spaces with fundamental symmetries $J_1$ and $J_2$, respectively, and
$J=
\begin{pmatrix}
J_1&0\\
0&J_2
\end{pmatrix}$.
\begin{theorem}\label{T:1}
Let $\sH=(\sH_1\oplus\sH_2,J)$ be an orthogonal decomposition of the
Kre\u{\i}n space $\sH$ and let $A^0$ be an incomplete block operator
of the form \eqref{A0}. Assume that $A_{11}=A_{11}^{[*]}$ and
$A_{21}=A_{12}^{[*]}$ are bounded, the numbers of negative squares
of the quadratic form $[A_{11}f,f]$ $(f\in \dom A_{11})$
$\nu_-[A_{11}]:=\nu_-(J_1 A_{11})=\k<\infty$, where
$\k\in\mathbb{Z}_+$, and let us introduce $J_{11}:=\sign(J_1A_{11})$
the (unitary) signature operator of $J_1A_{11}$. Then:
\begin{enumerate}\def\labelenumi{\rm(\roman{enumi})}
\item
 There exists a completion $A\in[(\sH,J)]$ of $A^0$ with some
 operator $A_{22}=A_{22}^{[*]}\in[(\sH_2,J_2)]$ such that
 $\nu_-[A]=\nu_-[A_{11}]=\k$ if and only if
 \begin{equation*}\label{E:1}
  \ran J_1A_{12}\subset\ran |A_{11}|^{1/2}.
 \end{equation*}

\item
In this case the operator $S=|A_{11}|^{[-1/2]}J_1A_{12}$, where
$|A_{11}|^{[-1/2]}$ denotes the (generalized) Moore-Penrose inverse
of $|A_{11}|^{1/2}$, is well defined and
$S\in[(\sH_2,J_2),(\sH_1,J_1)]$. Moreover, $S^{[*]}J_1J_{11}S$ is
the "smallest" operator in the solution set
\begin{equation*}\label{E:sol}
 \mathcal{A}:=\sk\{{A_{22}=A_{22}^{[*]}\in[(\sH_2,J_2)]:\, A=(A_{ij})_{i,j=1}^{2}:\nu_-[A]=\k
 }\}
\end{equation*}
and this solution set admits a description
\[
  \mathcal{A}=\sk\{{A_{22}\in[(\sH_2,J_2)]:\, A_{22}=J_2(S^*J_{11}S+Y)=S^{[*]}J_1J_{11}S+J_2Y,\, Y=Y^*\ge 0}\}.
\]
\end{enumerate}
\end{theorem}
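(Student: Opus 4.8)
The plan is to reduce the Kre\u{\i}n-space completion problem to the Hilbert-space completion result with a prescribed number of negative squares established in \cite{BH2015}, by conjugating the whole problem with the fundamental symmetry $J=\diag(J_1,J_2)$. First I would pass from the prospective completion $A$ to the Hilbert-space operator $B:=JA$, written blockwise as
\[
B=\begin{pmatrix} B_{11}&B_{12}\\ B_{21}&B_{22}\end{pmatrix},\qquad
B_{11}=J_1A_{11},\ B_{12}=J_1A_{12},\ B_{21}=J_2A_{21},\ B_{22}=J_2A_{22}.
\]
A short computation shows that the Kre\u{\i}n-space relations $A_{11}=A_{11}^{[*]}$ and $A_{21}=A_{12}^{[*]}$ are equivalent to the Hilbert-space relations $B_{11}=B_{11}^*$ and $B_{21}=B_{12}^*$, and that $A_{22}=A_{22}^{[*]}$ iff $B_{22}=B_{22}^*$. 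Since $J$ is unitary, $\nu_-[A]=\nu_-(JA)=\nu_-(B)$, and in particular $\nu_-(B_{11})=\nu_-(J_1A_{11})=\nu_-[A_{11}]=\k$. Thus completing $A^0$ to a Kre\u{\i}n-space selfadjoint $A$ with $\nu_-[A]=\k$ is exactly completing the incomplete Hilbert-space block operator $B^0$ (same unknown corner) to a selfadjoint $B$ with the minimal index $\nu_-(B)=\nu_-(B_{11})=\k$, which is the situation treated in \cite{BH2015}.

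Two identities make the translation transparent. Because $J_1$ is unitary, $|B_{11}|=(A_{11}^*J_1J_1A_{11})^{1/2}=|A_{11}|$, so the generalized Moore--Penrose inverse $|B_{11}|^{[-1/2]}$ coincides with $|A_{11}|^{[-1/2]}$, and $\sign(B_{11})=\sign(J_1A_{11})=J_{11}$ by definition. Applying the Hilbert-space theorem of \cite{BH2015} to $B^0$, the completion with the prescribed index exists iff $\ran B_{12}\subset\ran|B_{11}|^{1/2}$; rewriting $B_{12}=J_1A_{12}$ and $|B_{11}|=|A_{11}|$ gives precisely $\ran J_1A_{12}\subset\ran|A_{11}|^{1/2}$, which proves (i). Under this condition $S=|B_{11}|^{[-1/2]}B_{12}=|A_{11}|^{[-1/2]}J_1A_{12}$ is well defined, and its boundedness (hence $S\in[(\sH_2,J_2),(\sH_1,J_1)]$) follows from the range inclusion together with the closed graph theorem, as in the Shmul'yan argument.

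For part (ii) I would transport the Hilbert-space parametrization back through $A_{22}=J_2B_{22}$. The result of \cite{BH2015} identifies the smallest corner as $B_{22}=S^*J_{11}S$ and describes the solution set by $B_{22}=S^*J_{11}S+Y$ with $Y=Y^*\ge 0$; multiplying by $J_2$ and using $S^{[*]}J_1J_{11}S=J_2S^*J_{11}S$ (which follows from $S^{[*]}=J_2S^*J_1$) yields $A_{22}=S^{[*]}J_1J_{11}S+J_2Y=J_2(S^*J_{11}S+Y)$, the asserted description of $\mathcal{A}$. The claim that $S^{[*]}J_1J_{11}S$ is the \emph{smallest} element of $\mathcal{A}$ in the $J_2$-ordering is then the transported minimality: $A_{22}\ge_{J_2}A_{22}'$ means $J_2(A_{22}-A_{22}')\ge 0$, i.e.\ $B_{22}\ge B_{22}'$ in the Hilbert-space order, so the $J_2$-order on corners corresponds bijectively to the usual order on the $B_{22}$'s and the smallest $B_{22}$ gives the smallest $A_{22}$.

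The main obstacle is not conceptual but careful bookkeeping: one must verify that every selfadjointness, range, modulus, signature, and ordering statement is invariant under the $J$-twist, especially the two identities $|J_1A_{11}|=|A_{11}|$ and the order correspondence above, so that the quoted Hilbert-space theorem applies verbatim. The one genuinely analytic point to be checked is the boundedness of $S$ via the closed graph theorem under the range inclusion $\ran J_1A_{12}\subset\ran|A_{11}|^{1/2}$.
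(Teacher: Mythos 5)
Your proposal is correct and follows essentially the same route as the paper: multiply the block operator by the fundamental symmetry $J=\diag(J_1,J_2)$ to obtain a Hilbert-space selfadjoint completion problem, apply \cite[Theorem 1]{BH2015} using the identity $|J_1A_{11}|=|A_{11}|$, and transport the minimal solution and the parametrization back via $A_{22}=J_2\wt A_{22}$ and $S^{[*]}=J_2S^*J_1$. No substantive differences.
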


\begin{proof}
 Let us introduce a block operator
 $$
  \wt A^0=
   \begin{pmatrix}
    \wt A_{11}&\wt A_{12}\\
    \wt A_{21}&\ast
   \end{pmatrix}
=
   \begin{pmatrix}
     J_1 A_{11}&J_1 A_{12}\\
    J_2 A_{21}&\ast
   \end{pmatrix}.
 $$
 The blocks of this operator satisfy the identities $\wt A_{11}=\wt
 A_{11}^*$, $\wt A_{21}^*=\wt
 A_{12}$ and
 \begin{equation*}
   \begin{split}
     \ran J_1 A_{11}&=\ran \wt A_{11}\subset\ran |\wt A_{11}|^{1/2}=\ran (\wt A^*_{11}\wt A_{11})^{1/4}\\
     &=\ran (A^*_{11} A_{11})^{1/4}=\ran |A_{11}|^{1/2}.
   \end{split}
 \end{equation*}

 Then due to \cite[Theorem 1]{BH2015} a description of all selfadjoint operator completions of $\wt A^0$ admits
  representation $\wt A=\begin{pmatrix}
    \wt A_{11}&\wt A_{12}\\
    \wt A_{21}&\wt A_{22}
    \end{pmatrix}$ with $\wt A_{22}=\wt S^*J_{11} \wt S+Y$, where
     $\wt S=|\wt A_{11}|^{[-1/2]}\wt A_{12}$ and
    $Y=Y^*\ge 0$.

    This yields description for the solutions of the completion problem.
    The set of completions has the form $ A=\begin{pmatrix}
     A_{11}& A_{12}\\
     A_{21}& A_{22}
    \end{pmatrix}$, where
   \begin{equation*}
    \begin{split}
     A_{22}&=J_2\wt A_{22}= J_2
     A_{21}J_1|A_{11}|^{[-1/2]}J_{11}|A_{11}|^{[-1/2]}J_1A_{12}+J_2Y\\
     &=J_2S^*J_{11}S+J_2Y=S^{[*]}J_{1}J_{11}S+J_2Y.\qedhere
    \end{split}
    \end{equation*}
\end{proof}

\section{Some inertia formulas}
\label{sec2}


Some simple inertia formulas are now recalled. The factorization
$H=B^{[*]}EB$ clearly implies that $\nu_\pm[H]\le \nu_\pm[E]$, cf. \eqref{indform}. If
$H_1$ and $H_2$ are selfadjoint operators in a Kre\u{\i}n space,
then
\[
 H_1+H_2=\begin{pmatrix} I \\ I \end{pmatrix}^{[*]} \begin{pmatrix} H_1 & 0 \\ 0 & H_2 \end{pmatrix}
 \begin{pmatrix} I \\ I \end{pmatrix}
\]
shows that $\nu_\pm[H_1+H_2]\le \nu_\pm[H_1]+\nu_\pm[H_2]$. Consider
the selfadjoint block operator $H\in[(\sH_1,J_1)\oplus(\sH_2,J_2)]$,
where $J_i=J_i^*=J_i^{-1},\ (i=1,2)$ of the form
\begin{equation*}\label{H}
 H=H^{[*]}=\begin{pmatrix} A & B^{[*]} \\ B & I \end{pmatrix},
\end{equation*}
 By applying the above mentioned inequalities shows that
\begin{equation}\label{minneg}
 \nu_\pm[A]\le \nu_\pm[A-B^{[*]}B]+\nu_\pm(J_2).
\end{equation}
Assuming that $\nu_-[A-B^*J_2B]$ and $\nu_-(J_2)$ are finite, the
question when $\nu_-[A]$ attains its maximum in \eqref{minneg}, or
equivalently, $\nu_-[A-B^*J_2B]\ge \nu_-[A]-\nu_-(J_2)$ attains its
minimum, turns out to be of particular interest. The next result
characterizes this situation as an application of Theorem~\ref{T:1}.
Recall that if $J_1A=J_A |A|$ is the polar decomposition of $J_1A$, then
one can interpret $\sH_A=(\cran J_1A,J_A)$ as a Kre\u{\i}n space
generated on $\cran J_1A$ by the fundamental symmetry $J_A=\sign(J_1A)$.

\begin{theorem}\label{thmB}
Let $A\in[(\sH_1,J_1)]$ be selfadjoint,
$B\in[(\sH_1,J_1),(\sH_2,J_2)]$, $J_i=J_i^*=J_i^{-1}\in[\sH_i],\
(i=1,2)$, and assume that $\nu_-[A],\nu_-(J_2)<\infty$. If the
equality
\begin{equation*}\label{min}
 \nu_-[A] = \nu_-[A-B^{[*]}B]+\nu_-(J_2)
\end{equation*}
holds, then $\ran J_1B^{[*]}\subset \ran |A|^{1/2}$ and
$J_1B^{[*]}=|A|^{1/2}K$ for a unique operator
$K\in[(\sH_2,J_2),\sH_A]$ which is $J$-contractive: $J_2-K^*J_A K\ge 0$.

Conversely, if $B^{[*]}=|A|^{1/2}K$ for some $J$-contractive
operator $K\in[(\sH_2,J_2),\sH_A]$, then the equality
\eqref{min} is satisfied.
\end{theorem}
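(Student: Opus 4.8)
The plan is to reduce the inertia equality to the Kre\u{\i}n-space completion problem of Theorem~\ref{T:1} by means of a Schur-type factorization, and then to read off both the range condition and the $J$-contractivity directly from the description of the solution set. Write $H=\begin{pmatrix} A & B^{[*]} \\ B & I\end{pmatrix}$. First I would record the factorization
\[
 H=\begin{pmatrix} I & 0 \\ B & I\end{pmatrix}^{[*]}
 \begin{pmatrix} A-B^{[*]}B & 0 \\ 0 & I\end{pmatrix}
 \begin{pmatrix} I & 0 \\ B & I\end{pmatrix},
\]
where $L:=\begin{pmatrix} I & 0 \\ B & I\end{pmatrix}$ is boundedly invertible and, since $B^{[*]}=J_1B^*J_2$ and $J=\diag(J_1,J_2)$, has $L^{[*]}=\begin{pmatrix} I & B^{[*]} \\ 0 & I\end{pmatrix}$. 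Applying the factorization bound $\nu_\pm[C^{[*]}EC]\le\nu_\pm[E]$ recalled at the start of this section both to $H=L^{[*]}ML$ and to $M=(L^{-1})^{[*]}HL^{-1}$ (where $M$ is the middle block-diagonal term) gives the inertia identity
\[
 \nu_\pm[H]=\nu_\pm[A-B^{[*]}B]+\nu_\pm(J_2).
\]
Since $A=V^{[*]}HV$ with $V=\begin{pmatrix} I \\ 0\end{pmatrix}$, the same bound yields $\nu_-[A]\le\nu_-[H]$; together with the previous display this recovers \eqref{minneg}, and shows that the equality \eqref{min} holds precisely when $\nu_-[H]=\nu_-[A]=:\k$. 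In other words, \eqref{min} is equivalent to the statement that the completion of the incomplete block operator $\begin{pmatrix} A & B^{[*]} \\ B & \ast\end{pmatrix}$ obtained by choosing the $(2,2)$-entry equal to $I$ preserves the negative index $\k=\nu_-[A]$.

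Next I would apply Theorem~\ref{T:1} with $A_{11}=A$, $A_{12}=B^{[*]}$, $A_{21}=B$, so that $J_{11}=\sign(J_1A)=J_A$. For the forward direction, the equality \eqref{min} exhibits an index-preserving completion, so part~(i) forces $\ran J_1B^{[*]}\subset\ran|A|^{1/2}$; then $S:=|A|^{[-1/2]}J_1B^{[*]}$ is well defined, and since $\cran|A|^{1/2}=(\ker A)^{\perp}=\cran J_1A=\sH_A$ it is the unique $K\in[(\sH_2,J_2),\sH_A]$ with $J_1B^{[*]}=|A|^{1/2}K$. By part~(ii) the solution set is $\{\,A_{22}=S^{[*]}J_1J_AS+J_2Y:\ Y=Y^*\ge0\,\}$, so membership of $A_{22}=I$ means $J_2Y=I-S^{[*]}J_1J_AS$ with $Y\ge0$; using $S^{[*]}=J_2S^*J_1$ one computes $J_2S^{[*]}J_1J_AS=S^*J_AS$, whence $Y=J_2-K^*J_AK\ge0$, which is exactly the asserted $J$-contractivity. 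The converse runs the same chain backwards: the hypothesis $J_1B^{[*]}=|A|^{1/2}K$ supplies the range condition, so Theorem~\ref{T:1} applies with $S=K$, and $J_2-K^*J_AK\ge0$ says precisely that $Y:=J_2-S^*J_AS\ge0$, i.e.\ $A_{22}=I$ lies in the solution set; hence $\nu_-[H]=\k=\nu_-[A]$ and \eqref{min} follows from the inertia identity above.

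The main obstacle is the bookkeeping needed to turn the abstract membership ``$I$ lies in the solution set'' into the clean inequality $J_2-K^*J_AK\ge0$: one has to track the two signature operators correctly (checking $|A|=|J_1A|$ and $J_{11}=\sign(J_1A)=J_A$), identify $S$ with $K$ through the Moore--Penrose identity $\cran|A|^{1/2}=\sH_A$, and carry out the cancellation $J_2S^{[*]}J_1J_AS=S^*J_AS$. Once these are in place, everything else is a direct transcription of Theorem~\ref{T:1}.
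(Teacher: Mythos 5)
Your proposal is correct and follows essentially the same route as the paper: the same Schur-type congruence of $H=\begin{pmatrix} A & B^{[*]} \\ B & I\end{pmatrix}$ to $\operatorname{diag}(A-B^{[*]}B,\,I)$, the observation that \eqref{min} is equivalent to $\nu_-[H]=\nu_-[A]$, and then Theorem~\ref{T:1} to get the range condition, the identification $S=K$, and the $J$-contractivity from the fact that $A_{22}=I$ lies in the solution set whose minimal element is $S^{[*]}J_1J_AS$. The only differences are presentational (you spell out the inertia invariance under congruence and the cancellation $J_2S^{[*]}J_1J_AS=S^*J_AS$ explicitly, where the paper simply invokes the minimality property).
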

\begin{proof}
Assume that \eqref{min} is satisfied. The factorization
\[
 H=\begin{pmatrix} A & B^{[*]} \\ B & I \end{pmatrix}
 = \begin{pmatrix} I & B^{[*]}\\ 0 & I \end{pmatrix}
  \begin{pmatrix} A-B^{[*]} B& 0 \\ 0 & I \end{pmatrix}
   \begin{pmatrix} I  & 0 \\  B & I \end{pmatrix}
\]
shows that $\nu_-[H]=\nu_-[A-B^{[*]} B]+\nu_-(J_2)$, which combined
with the equality \eqref{min} gives $\nu_-[H]=\nu_-[A]$. Therefore,
by Theorem~\ref{T:1} one has $\ran J_1B^{[*]}\subset \ran |A|^{1/2}$
and this is equivalent to the existence of a unique operator $K\in
[(\sH_2,J_2),\sH_A]$ such that $J_1B^{[*]}=|A|^{1/2}K$; i.e.
$K=|A|^{[-1/2]}J_1B^{[*]}$. Furthermore, $K^{[*]}J_1J_{A}K\leq_{J_2}
I$ by the minimality property of $K^{[*]}J_1J_{A}K$ in
Theorem~\ref{T:1}, in other words $K$ is a $J$-contraction.

Converse, if $J_1B^{[*]}=|A|^{1/2}K$ for some $J$-contraction $K\in
[(\sH_2,J_2),\sH_A]$, then clearly $\ran J_1
B^{[*]}\subset\ran |A|^{1/2}$. By Theorem~\ref{T:1} the completion
problem for $H^{0}$ has solutions with the minimal solution
$S^{[*]}J_1J_{A}S$, where
$$S=|A|^{[-1/2]}J_1B^{[*]}=|A|^{[-1/2]}|A|^{1/2}K=K.$$
 Furthermore, by
$J$-contractivity of $K$ one has ${K^{[*]}J_1J_{A}K\le_{J_2} I}$, i.e. $I$
is also a solution and thus $\nu_-[H]=\nu_-[A]$ or, equivalently,
the equality \eqref{min} is satisfied.
\end{proof}

\section{A pair of completion problems in a Kre\u{\i}n space}

In this section we introduce and describe the solutions of a Kre\u{\i}n space version of a completion problem that was treated in \cite{BH2015}.

Let $(\sH_i,(J_i\cdot,\cdot))$ and $(\sH,(J\cdot,\cdot))$ be Kre\u{\i}n spaces, where $\sH=\sH_1\oplus\sH_2$,$J=\begin{pmatrix}
  J_1&0\\
  0&J_2
\end{pmatrix}$, and $J_i$ are fundamental
symmetries $(i=1,2)$, let $T_{11}=T_{11}^{[*]}\in[(\sH_1,J_1)]$ be
an operator such that $\nu_-(I-T_{11}^*T_{11})=\kappa<\infty$.
Denote $\wt T_{11}=J_1T_{11}$, then $\wt T_{11}=\wt T_{11}^*$ in the
Hilbert space $\sH_1$. Rewrite $\nu_-(I-T_{11}^*T_{11})=\nu_-(I-\wt
T_{11}^2)$. Denote
\begin{equation}\label{E:J}
 J_+=\sign(I-\wt T_{11}),\
 J_-=\sign(I+\wt T_{11}), \text{ and }
J_{11}=\sign(I-\wt T_{11}^2),
\end{equation}
and let $\kappa_+=\nu_-(J_+)$ and $\kappa_-=\nu_-(J_-)$. It is easy
to get that $J_{11}=J_-J_+=J_+J_-$. Moreover, there is an equality
$\kappa=\kappa_- +\kappa_+$ (see \cite[Lemma 5.1]{BH2015}). We recall
the results for the operator $\wt T_{11}$ from the paper
\cite{BH2015} and after that reformulate them for the operator
$T_{11}$.
We recall completion problem and its solutions that was investigated in a Hilbert space setting in  \cite{BH2015}.
The problem concerns the existence and a description of
selfadjoint operators $\wt T$ such that $\wt A_+=I+\wt T$ and $\wt
A_-=I-\wt T$ solve the corresponding completion problems
\begin{equation}\label{E:A1}
\wt A_{\pm}^0=
\begin{pmatrix}
 I\pm \wt T_{11}&\pm \wt T_{21}^*\\
 \pm \wt T_{21}&\ast
\end{pmatrix},
\end{equation}
under \emph{minimal index conditions} $\nu_-(I+\wt T)=\nu_-(I+\wt
T_{11})$, $\nu_-(I-\wt T)=\nu_-(I-\wt T_{11})$, respectively. The solution set is denoted by $\Ext_{\wt T_1,\kappa}(-1,1)$.

The next theorem gives a general solvability criterion for the
completion problem \eqref{E:A1} and describes all solutions to this
problem.

\begin{theorem}\label{T:contr1}
$($\cite[Theorem 5]{BH2015}$)$ Let $\wt T_1=\begin{pmatrix}\wt
T_{11}\\ \wt T_{21}
\end{pmatrix}:\sH_1\to\begin{pmatrix}\sH_1\\\sH_2\end{pmatrix}$ be a symmetric operator
 with
$\wt T_{11}=\wt T_{11}^*\in[\sH_1]$ and $\nu_-(I-\wt
T_{11}^2)=\kappa<\infty$, and let $J_{11}=\sign(I-\wt T_{11}^2)$.
Then the completion problem for $\wt A_{\pm}^0$ in \eqref{E:A1} has a
solution $I\pm \wt T$ for some $\wt T=\wt T^*$ with $\nu_-(I-\wt
T^2)=\kappa$ if and only if the following condition is satisfied:
\begin{equation}\label{crit1}
 \nu_-(I-\wt T_{11}^2)=\nu_-(I-\wt T_1^*\wt T_1).
\end{equation}
If this condition is satisfied then the following facts hold:
\begin{enumerate}\def\labelenumi{\rm(\roman{enumi})}
\item The completion problems for $\wt A_{\pm}^0$ in
\eqref{E:A1} have minimal solutions $\wt A_\pm$.

\item The operators $\wt T_m:=\wt A_+-I$ and $\wt T_M:=I-\wt A_-\in
\Ext_{\wt T_1,\kappa}(-1,1)$.

\item The operators $\wt T_m$ and $\wt T_M$ have the block form
\begin{equation}\label{E:T1}
\begin{split}
\wt T_m&=
\begin{pmatrix}
 \wt T_{11}&D_{\wt T_{11}}V^*\\
 VD_{\wt T_{11}}&-I+V(I-\wt T_{11})J_{11}V^*
\end{pmatrix},\\
\wt T_M=&
\begin{pmatrix}
 \wt T_{11}&D_{\wt T_{11}}V^*\\
 VD_{\wt T_{11}}&I-V(I+\wt T_{11})J_{11}V^*
\end{pmatrix},
\end{split}
\end{equation}
where $D_{\wt T_{11}}:=|I-\wt T_{11}^2|^{1/2}$ and $V$ is given by
$V:=\clos(\wt T_{21}D_{\wt T_{11}}^{[-1]})$.

\item The operators $\wt T_m$ and $\wt T_M$ are extremal extensions of $\wt T_1$:
\begin{equation*}\label{E:Ext1}
\wt T\in\Ext_{\wt T_1,\kappa}(-1,1)\  \text{  iff  }\  \wt T=\wt
T^*\in[\sH],\quad \wt T_m\leq \wt T\leq \wt T_M.
\end{equation*}

\item The operators $\wt T_m$ and $\wt T_M$ are connected via
\begin{equation*}\label{E:5_1}
(-\wt T)_m=-\wt T_M, \quad (-\wt T)_M=-\wt T_m.
\end{equation*}
\end{enumerate}
\end{theorem}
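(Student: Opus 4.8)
The plan is to reduce the whole statement to the two ordinary completion problems for $\wt A_\pm^0$ via Theorem~\ref{T:1} (specialized to the Hilbert space case $J_1=J_2=I$) and to control their mutual compatibility through the inertia formula of Theorem~\ref{thmB}. Throughout I would write $P\colon\sH_1\to\sH_1\oplus\sH_2$ for the canonical embedding and use the angle operator $V=\clos(\wt T_{21}D_{\wt T_{11}}^{[-1]})$ with $D_{\wt T_{11}}=|I-\wt T_{11}^2|^{1/2}$ and $J_{11}=\sign(I-\wt T_{11}^2)$.

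For the necessity half of the solvability criterion I would use a compression argument. If $\wt T=\wt T^*$ is a solution, then a direct block computation gives $P^*(I-\wt T^2)P=I-\wt T_1^*\wt T_1$, since the first block-column of $\wt T$ is exactly $\wt T_1$ and hence $P^*\wt T^2P=\wt T_{11}^2+\wt T_{21}^*\wt T_{21}=\wt T_1^*\wt T_1$. Because compressions do not increase the negative index, $\nu_-(I-\wt T_1^*\wt T_1)\le\nu_-(I-\wt T^2)=\kappa=\nu_-(I-\wt T_{11}^2)$; since $I-\wt T_1^*\wt T_1=(I-\wt T_{11}^2)-\wt T_{21}^*\wt T_{21}$ differs from $I-\wt T_{11}^2$ by subtraction of a nonnegative operator, the reverse inequality always holds, and \eqref{crit1} follows.

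For sufficiency and the explicit description I would first apply Theorem~\ref{thmB} with $A=I-\wt T_{11}^2$, $B=\wt T_{21}$ and $J_2=I$; its conclusion translates \eqref{crit1} into the two statements $\ran\wt T_{21}^*\subset\ran D_{\wt T_{11}}$ and $VJ_{11}V^*\le I$. Using the commuting factorization $D_{\wt T_{11}}=|I-\wt T_{11}|^{1/2}|I+\wt T_{11}|^{1/2}$ one gets $\ran\wt T_{21}^*\subset\ran|I\pm\wt T_{11}|^{1/2}$, which are precisely the hypotheses of Theorem~\ref{T:1} for the two completion problems $\wt A_\pm^0$. That theorem then yields minimal completions $\wt A_\pm$ with $\nu_-(\wt A_\pm)=\nu_-(I\pm\wt T_{11})=\kappa_\mp$, and I would read off their bottom-right blocks: using $|I\pm\wt T_{11}|^{[-1/2]}D_{\wt T_{11}}=|I\mp\wt T_{11}|^{1/2}$ and $|I\mp\wt T_{11}|J_\mp=(I\mp\wt T_{11})J_{11}$ a short calculation identifies the minimal blocks as $V(I\mp\wt T_{11})J_{11}V^*$, giving \eqref{E:T1} for $\wt T_m=\wt A_+-I$ and $\wt T_M=I-\wt A_-$. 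This establishes (i)--(iii).

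The heart of the argument is (iv), and the point where the two problems must be made to cooperate. The key observation is that $\wt T_M-\wt T_m$ has only the bottom-right block, equal to $2(I-VJ_{11}V^*)$, so the contractivity $VJ_{11}V^*\le I$ coming from \eqref{crit1} is exactly what yields $\wt T_m\le\wt T_M$. Granting this, $I-\wt T_m\ge I-\wt T_M=\wt A_-$, and since $I-\wt T_m$ is itself a completion of $\wt A_-^0$ lying above the minimal one, it belongs to the $-$ solution set, whence $\nu_-(I-\wt T_m)=\kappa_+$; together with $\nu_-(I+\wt T_m)=\kappa_-$ and the disjointness of the negative spectral parts of $I\pm\wt T_m$ this gives $\nu_-(I-\wt T_m^2)=\kappa$, so $\wt T_m\in\Ext_{\wt T_1,\kappa}(-1,1)$, and symmetrically for $\wt T_M$. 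For a general solution $\wt T$ the compression bound forces $\nu_-(I\pm\wt T)=\kappa_\mp$, so $I\pm\wt T$ solve the $\pm$ problems and, by the description in Theorem~\ref{T:1}, lie above the minimal completions, i.e. $\wt T_m\le\wt T\le\wt T_M$; conversely $\wt T_m\le\wt T\le\wt T_M$ forces, via the vanishing $(1,1)$-block of the nonnegative differences, that $\wt T$ has blocks $\wt T_{11},\wt T_{21}$ and that $I\pm\wt T$ sit in the respective solution sets, giving $\nu_-(I-\wt T^2)=\kappa$. Finally (v) is immediate from the symmetry $\wt T_1\mapsto-\wt T_1$, which interchanges the $+$ and $-$ problems and hence $\wt T_m$ and $-\wt T_M$. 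I expect the main obstacle to be the middle step: verifying through Theorem~\ref{thmB} that the single index identity \eqref{crit1} is equivalent to the simultaneous solvability of both $\pm$ completion problems together with the compatibility inequality $VJ_{11}V^*\le I$ that makes $\wt T_m\le\wt T_M$, the delicate routine point being the passage between $\ran D_{\wt T_{11}}$ and $\ran|I\pm\wt T_{11}|^{1/2}$ via the commuting factorization.
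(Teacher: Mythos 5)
Your argument is sound, but note first that this paper does not actually prove Theorem~\ref{T:contr1}: it is quoted from \cite[Theorem 5]{BH2015}, and the only proof of a statement of this shape given here is that of its Kre\u{\i}n-space generalization, Theorem~\ref{T:contr}. Measured against that proof, your route is essentially the same one: necessity via the compression/inertia inequality (the $J=I$ case of \eqref{minneg}); Theorem~\ref{thmB} with $A=I-\wt T_{11}^2$, $B=\wt T_{21}$ to extract both the range inclusion and the contractivity $VJ_{11}V^*\le I$; Theorem~\ref{T:1} for the minimal completions and for the upward-closedness of the two solution sets; the difference formula $\wt T_M-\wt T_m=\diag(0,\,2(I-VJ_{11}V^*))$ as the hinge for (ii)--(iv); and the identity $\nu_-(I-\wt T^2)=\nu_-(I-\wt T)+\nu_-(I+\wt T)$ (your ``disjointness of negative spectral parts'', which is the $J=I$ case of Lemma~\ref{LJT2} and should be stated explicitly) to merge the two one-sided index conditions. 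Two of your sub-steps are genuinely different and, in this Hilbert-space special case, cleaner: you obtain $\ran\wt T_{21}^*\subset\ran|I\pm\wt T_{11}|^{1/2}$ from the commuting factorization $D_{\wt T_{11}}=|I-\wt T_{11}|^{1/2}|I+\wt T_{11}|^{1/2}$, where the paper (Lemma~\ref{L:ranran} via Lemma~\ref{L:JT}) runs a $2\times2$ block-permutation argument that survives the indefinite setting; and you identify the minimal $(2,2)$-blocks by a direct computation with $|I\mp\wt T_{11}|J_\mp=(I\mp\wt T_{11})J_{11}$, avoiding the link operators $L_T$ that become unavoidable in Theorem~\ref{T:contr}, where $\wt T_{11}$ is only $J_1$-selfadjoint and the factors no longer commute. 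Both shortcuts are legitimate here precisely because $\wt T_{11}=\wt T_{11}^*$ in a genuine Hilbert space; they would not carry over to the paper's main theorem.
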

For what follows it is convenient to reformulate the above theorem
in a Kre\u{\i}n space setting. Consider the Kre\u{\i}n space $(\sH,J)$ and a selfadjoint operator
$T$ in this space. Now the problem concerns selfadjoint operators $A_+=I+T$ and $A_-=I-T$ in the Kre\u{\i}n space $(\sH,J)$ that solve the
completion problems
\begin{equation}\label{E:A2}
A_{\pm}^0=
\begin{pmatrix}
 I\pm T_{11}&\pm T_{21}^{[*]}\\
 \pm T_{21}&\ast
\end{pmatrix},
\end{equation}
under \emph{minimal index conditions}
$\nu_-(I+JT)=\nu_-(I+J_1T_{11})$ and
$\nu_-(I-JT)=\nu_-(I-J_1T_{11})$, respectively. The set of solutions
$T$ to the problem \eqref{E:A2} will be denoted by
$\Ext_{J_2T_1,\kappa}(-1,1)$.

Denote
\begin{equation}\label{Tcol2}
T_1=\begin{pmatrix}T_{11}\\  T_{21}
\end{pmatrix}:(\sH_1,J_1)\to\begin{pmatrix}(\sH_1,J_1)\\(\sH_2,J_2)\end{pmatrix},
\end{equation}
so that $T_1$ is symmetric (nondensely defined) operator in the Kre\u{\i}n space
$[(\sH_1,J_1)]$, i.e. $T_{11}=T_{11}^{[*]}$.
\begin{theorem}\label{T:contr2}
Let $T_1$ be a symmetric operator in a Kre\u{\i}n space sense as in
\eqref{Tcol2} with $T_{11}=T_{11}^{[*]}\in[(\sH_1,J_1)]$ and
$\nu_-(I-T_{11}^*T_{11})=\kappa<\infty$, and let
$J=\sign(I-T_{11}^*T_{11})$. Then the completion problems for
$A_{\pm}^0$ in \eqref{E:A2} have a solution $I\pm T$ for some
$T=T^{[*]}$ with $\nu_-(I-T^*T)=\kappa$ if and only if the following
condition is satisfied:
\begin{equation}\label{crit2}
 \nu_-(I-T_{11}^*T_{11})=\nu_-(I-T_1^*T_1).
\end{equation}
If this condition is satisfied then the following facts hold:
\begin{enumerate}\def\labelenumi{\rm(\roman{enumi})}
\item The completion problems for $A_{\pm}^0$ in
\eqref{E:A2} have "minimal"($J_2$-minimal) solutions $A_\pm$.

\item The operators $T_m:=A_+-J$ and $T_M:=J-A_-\in
\Ext_{J_2T_1,\kappa}(-1,1)$.

\item The operators $T_m$ and $T_M$ have the block form
\begin{equation}\label{E:T2}
\begin{split}
&T_m=
\begin{pmatrix}
 T_{11}&J_1D_{T_{11}}V^*\\
 J_2VD_{T_{11}}&-J_2+J_2V(I-J_1T_{11})J_{11}V^*
\end{pmatrix},\\
&T_M=
\begin{pmatrix}
 T_{11}&J_1D_{T_{11}}V^*\\
 J_2VD_{T_{11}}&J_2-J_2V(I+J_1T_{11})J_{11}V^*
\end{pmatrix},
\end{split}
\end{equation}
where $D_{T_{11}}:=|I-T_{11}^*T_{11}|^{1/2}$ and $V$ is given by
$V:=\clos(J_2T_{21}D_{T_{11}}^{[-1]})$.

\item The operators $T_m$ and $T_M$ are $J_2$-extremal extensions of $T_1$:
\begin{equation*}\label{E:Ext2}
T\in \Ext_{J_2T_1,\kappa}(-1,1)\  \text{  iff  }\
T=T^{[*]}\in[(\sH,J)],\quad T_m\leq_{J_2} T\leq_{J_2} T_M.
\end{equation*}

\item The operators $T_m$ and $T_M$ are connected via
\begin{equation*}\label{E:5_2}
(-T)_m=-T_M, \quad (-T)_M=-T_m.
\end{equation*}
\end{enumerate}
\end{theorem}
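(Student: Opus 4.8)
The plan is to deduce Theorem~\ref{T:contr2} from its Hilbert-space counterpart Theorem~\ref{T:contr1} by passing from the Kre\u{\i}n-space operators to their products with the fundamental symmetries. First I would introduce $\wt T_{11}:=J_1T_{11}$, $\wt T_{21}:=J_2T_{21}$ and, for a selfadjoint (in the Kre\u{\i}n sense) completion $T=T^{[*]}$ of $T_1$, the operator $\wt T:=JT$ with $J:=\diag(J_1,J_2)$. Using $T_{11}^{[*]}=J_1T_{11}^*J_1$ and $T^{[*]}=JT^*J$ one checks at once that Kre\u{\i}n-selfadjointness of $T_{11}$ and $T$ is equivalent to ordinary selfadjointness of $\wt T_{11}$ and $\wt T$ in the Hilbert spaces $\sH_1$ and $\sH$, and that $\wt T_1:=JT_1=\col(\wt T_{11},\wt T_{21})$ is the Hilbert-space column operator attached to $T_1$. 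Since $J^2=I$ one gets the key identities $\wt T^2=T^*T$, $\wt T_{11}^2=T_{11}^*T_{11}$ and $\wt T_1^*\wt T_1=T_1^*T_1$; consequently $\nu_-(I-T_{11}^*T_{11})=\nu_-(I-\wt T_{11}^2)=\kappa$, the signature $J_{11}=\sign(I-\wt T_{11}^2)$ of \eqref{E:J} coincides with $\sign(I-T_{11}^*T_{11})$ (the operator denoted $J$ in the statement), the defect $D_{T_{11}}=|I-T_{11}^*T_{11}|^{1/2}$ equals $D_{\wt T_{11}}$, and the criterion \eqref{crit2} is literally \eqref{crit1}.

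Next I would identify the two completion problems. Left-multiplying the incomplete blocks by $J$ and using $J_1T_{21}^{[*]}=\wt T_{21}^*$ and $J_2T_{21}=\wt T_{21}$, one sees that the Kre\u{\i}n problem \eqref{E:A2} with free $(2,2)$-block $T_{22}$ is carried bijectively onto the Hilbert problem \eqref{E:A1} for $\wt A_\pm^0$ with free block $\wt T_{22}=J_2T_{22}$, while the minimal index conditions become exactly $\nu_-(I\pm\wt T)=\nu_-(I\pm\wt T_{11})$, because $JT=\wt T$ and $J_1T_{11}=\wt T_{11}$. Hence $T\mapsto JT$ is a bijection of $\Ext_{J_2T_1,\kappa}(-1,1)$ onto $\Ext_{\wt T_1,\kappa}(-1,1)$, and the solvability part of the theorem is precisely the solvability criterion of Theorem~\ref{T:contr1}.

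With this dictionary I would transfer the remaining assertions. Setting $A_\pm:=J\wt A_\pm$, $T_m:=J\wt T_m$ and $T_M:=J\wt T_M$, the relations $\wt T_m=\wt A_+-I$ and $\wt T_M=I-\wt A_-$ give $T_m=A_+-J$ and $T_M=J-A_-$, which is (ii); the block forms \eqref{E:T2} then follow from \eqref{E:T1} by left multiplication by $\diag(J_1,J_2)$ together with $J_1\wt T_{11}=T_{11}$, $D_{\wt T_{11}}=D_{T_{11}}$ and $V=\clos(J_2T_{21}D_{T_{11}}^{[-1]})$, giving (iii). For (iv) the crucial observation is that every solution shares the same $(1,1)$-, $(1,2)$- and $(2,1)$-blocks, so each difference $T-T_m$ and $T_M-T$ is supported on $\sH_2$, where the inequality $\leq_{J_2}$ reduces to $J_2[(T)_{22}-(T_m)_{22}]\ge 0$; since $(T)_{22}-(T_m)_{22}=J_2[(\wt T)_{22}-(\wt T_m)_{22}]$, the chain $T_m\leq_{J_2}T\leq_{J_2}T_M$ is equivalent to $\wt T_m\leq\wt T\leq\wt T_M$, so (iv) follows from Theorem~\ref{T:contr1}; the $J_2$-minimality in (i) is the same statement applied to the free blocks, and (v) follows from $-\wt T=J(-T)$ and the corresponding Hilbert-space relations.

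The routine part is the algebra of carrying the fundamental symmetries through the adjoints and defect operators. The step that requires genuine care, and which I expect to be the main obstacle, is verifying that the indefinite partial order $\leq_{J_2}$ corresponds correctly to the Hilbert order under $T\mapsto JT$: one must confirm that the differences of solutions lie in the $(2,2)$-corner and that conjugation by $J_2$ is an order isomorphism there. Once that is settled, everything else is a direct translation of Theorem~\ref{T:contr1}.
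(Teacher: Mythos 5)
Your proposal is correct and follows essentially the same route as the paper: the paper's own proof is a short reduction to Theorem~\ref{T:contr1} via the correspondence $T\mapsto\widetilde T=JT$, deriving \eqref{crit2} from \eqref{crit1} through $I-T_{11}^{*}T_{11}=I-\widetilde T_{11}^{2}$ and obtaining \eqref{E:T2} from \eqref{E:T1} by multiplication with the fundamental symmetry. You simply carry out this translation in more detail (in particular the verification that $\leq_{J_2}$ corresponds to the Hilbert-space order on the free $(2,2)$-corner), which the paper leaves implicit.
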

\begin{proof}
  The proof is obtained by systematic use of the equivalence that $T$ is a selfadjoint operator in a Kre\u{\i}n space if and only if $\wt T$ is a selfadjoint in a Hilbert space. In particular, $T$ gives solutions to the completion problems \eqref{E:A2} if and only if $\wt T$ solves the completion problems \eqref{E:A2}. In view of
  $$
  I-T_{11}^*T_{11}=I-T_{11}^*JJT_{11}=I-\wt T_{11}^2,
  $$
  we are getting formula \eqref{crit2} from \eqref{crit1}.
  Then formula \eqref{E:T2} follows by multiplying the operators in \eqref{E:T1} by the fundamental symmetry.
\end{proof}

\section{Completion problem in a Pontryagin space}
 \begin{subsection}{Defect operators and link operators}

 Let $(\sH,(\cdot,\cdot))$  be a Hilbert space and let $J$  be a symmetry in
$\sH$, i.e. $J=J^*=J^{-1}$, so that $(\sH,(J\cdot,\cdot))$, becomes
a Pontryagin space. Then associate with $T\in[\sH]$ the
corresponding defect and signature operators
\[
 D_T=|J-T^*JT|^{1/2},\quad J_T=\sign(J-T^*JT), \quad \sD_T=\cran D_T,
\]
where the so-called defect subspace $\sD_T$ can be considered as a
Pontryagin space with the fundamental symmetry $J_T$. Similar
notations are used with $T^*$:
\[
 D_{T^*}=|J-TJT^*|^{1/2},\quad J_{T^*}=\sign(J-TJT^*), \quad \sD_{T^*}=\cran D_{T^*}.
\]
By definition $J_TD_T^2=J-T^*JT$ and $J_TD_T=D_TJ_T$ with analogous
identities for $D_{T^*}$ and $J_{T^*}$. In addition,
\begin{equation*}\label{eqC3}
 (J-T^*JT)JT^*=T^*J(J-TJT^*), \,
 (J-TJT^*)JT=TJ(J-T^*JT).
\end{equation*}

Recall that $T\in[\sH]$ is said to be a $J$-contraction if
$J-T^*JT\ge 0$, i.e. $\nu_-(J-T^*JT)=0$. If, in addition, $T^*$ is a
$J$-contraction, $T$ is termed as a $J$-bicontraction.

For the following consideration an indefinite version of the
commutation relation of the form $TD_T=D_{T^*}T$ is needed; these
involve so-called link operators introduced in \cite[Section
4]{ACG87} (see also \cite{BH2015}).

\begin{definition}\label{linkoper}
There exist unique operators $L_T\in [\sD_T,\sD_{T^*}]$ and
$L_{T^*}\in [\sD_{T^*},\sD_T]$ such that
\begin{equation}\label{E:linkoper}
  D_{T^*}L_T=TJ D_T\uphar \sD_T, \quad
 D_T L_{T^*}=T^*J D_{T^*}\uphar \sD_{T^*};
\end{equation}
in fact, $L_T=D_{T^*}^{[-1]}TJ D_T\uphar \sD_T$ and
$L_{T^*}=D_T^{[-1]}T^*J D_{T^*}\uphar \sD_{T^*}$.
\end{definition}

The following identities can be obtained with direct calculations;
see \cite[Section~4]{ACG87}:
\begin{equation}\label{Link2}
\begin{array}{c}
  L_T^* J_{T^*}\uphar \sD_{T^*}=J_{T}L_{T^*};\\
  (J_T-D_TJD_T)\uphar \sD_T=L_T^*J_{T^*}L_T;\\
  (J_{T^*}-D_{T^*}JD_{T^*})\uphar \sD_{T^*}=L_{T^*}^* J_T L_{T^*}.
\end{array}
\end{equation}

Now let $T$ be selfadjoint in Pontryagin space $(\sH,J)$, i.e.
$T^*=JTJ$. Then connections between $D_{T^*}$ and $D_T$, $J_{T^*}$
and $J_T$, $L_{T^*}$ and $L_T$ can be established.
\begin{lemma}\label{L:DTstar}
 Assume that $T^*=JTJ$. Then $D_T=|I-T^2|^{1/2}$ and the following equalities hold:
  \begin{equation}\label{E:DTstar}
   D_{T^*}=JD_TJ,
  \end{equation}
  in particular,
  \begin{equation*}\label{E:domDTstar}
   \sD_{T^*}=J\sD_T\text{ and }\sD_{T}=J\sD_{T^*};
  \end{equation*}
  \begin{equation}\label{E:JTstar}
   J_{T^*}=JJ_TJ;
  \end{equation}
  \begin{equation}\label{E:LTstar}
   L_{T^*}=JL_TJ.
  \end{equation}
\end{lemma}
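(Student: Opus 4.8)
The plan is to reduce everything to two algebraic identities coming from $T^*=JTJ$ together with $J^2=I$. Namely $T^*JT=(JTJ)JT=JT^2$ and $TJT^*=TJ(JTJ)=T^2J$, so that the two defect expressions become $J-T^*JT=J(I-T^2)$ and $J-TJT^*=(I-T^2)J$. The point I would stress is that these are unitarily conjugate to one another,
\[
 J-TJT^*=J\,(J-T^*JT)\,J,
\]
so the entire lemma amounts to transporting conjugation by the unitary $J$ through the modulus, the sign calculus, and the Moore--Penrose/link-operator constructions. Using the definition $|C|=(C^*C)^{1/2}$ from Section~2, the formula $D_T=|I-T^2|^{1/2}$ follows at once: $D_T^2=|J(I-T^2)|$ and, since $J$ is unitary, $(J(I-T^2))^*(J(I-T^2))=(I-T^2)^*(I-T^2)$, whence $|J(I-T^2)|=|I-T^2|$.

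Next I would square to get \eqref{E:DTstar}. One has $D_{T^*}^2=|(I-T^2)J|=J\,|I-T^2|\,J=JD_T^2J=(JD_TJ)^2$, and because $JD_TJ\ge 0$ the uniqueness of the positive square root gives $D_{T^*}=JD_TJ$. The subspace relations $\sD_{T^*}=J\sD_T$ and $\sD_{T}=J\sD_{T^*}$ are then immediate, since $J$ is a homeomorphism and $\cran(JD_TJ)=J\,\cran D_T$. For the signature operators I would invoke that conjugation by the unitary $J$ commutes with the spectral (sign) functional calculus, i.e. $\sign(JHJ)=J\sign(H)J$; applying this to $H=J-T^*JT$ and using the conjugation identity above yields $J_{T^*}=\sign(J(J-T^*JT)J)=J\,\sign(J-T^*JT)\,J=JJ_TJ$, which is \eqref{E:JTstar}.

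The main obstacle, and the only step needing genuine care, is the link-operator identity \eqref{E:LTstar}, because it involves restrictions to the defect subspaces and Moore--Penrose inverses rather than plain functional calculus. Rather than computing $L_{T^*}$ directly, I would verify that $JL_TJ$ satisfies the \emph{defining} relation of $L_{T^*}$ and then invoke the uniqueness asserted in Definition~\ref{linkoper}. Indeed $JL_TJ\in[\sD_{T^*},\sD_T]$ by the subspace relations just established, and using $D_TJ=JD_{T^*}$ (a restatement of \eqref{E:DTstar}), the defining relation $D_{T^*}L_T=TJD_T\uphar\sD_T$, and $JTJ=T^*$, one computes for $x\in\sD_{T^*}$ (so that $Jx\in\sD_T$) that
\[
 D_T(JL_TJ)x=J\,D_{T^*}L_T(Jx)=J\,TJD_T(Jx)=JTJ\,D_TJ\,x=T^*JD_{T^*}x.
\]
Hence $D_T(JL_TJ)=T^*JD_{T^*}\uphar\sD_{T^*}$, which is exactly the equation characterizing $L_{T^*}$, so uniqueness forces $L_{T^*}=JL_TJ$. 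The only recurring points demanding attention are the bookkeeping of domains and ranges of the restricted operators and the repeated appeals to $J^2=I$ and the unitarity of $J$.
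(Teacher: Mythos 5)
Your proof is correct and follows essentially the same route as the paper: the key identity $J-TJT^*=J(J-T^*JT)J$, conjugation of the modulus and sign by the unitary $J$, and the same computation for the link operator (the paper evaluates $D_TL_{T^*}$ directly and cancels $D_T$, while you verify that $JL_TJ$ satisfies the defining relation and invoke uniqueness from Definition~\ref{linkoper} --- the same calculation read in the other direction). The only cosmetic differences are that the paper obtains $D_{T^*}=JD_TJ$ from the fourth-root formula rather than by uniqueness of positive square roots, and derives \eqref{E:JTstar} from $J_TD_T^2=JJ_{T^*}JD_T^2$ instead of the functional-calculus identity $\sign(JHJ)=J\sign(H)J$; both variants are sound.
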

\begin{proof}
 The defect operator of $T$ can be calculated by the formula
 $$
  D_T=\sk({\sk({I-(T^*)^2})JJ(I-T^2)})^{1/4}=\sk({\sk({I-(T^*)^2})(I-T^2)})^{1/4}.
 $$
 Then
 $$
  D_{T^*}=\sk({J\sk({I-(T^*)^2})(I-T^2)J})^{1/4}=J\sk({\sk({I-(T^*)^2})(I-T^2)})^{1/4}J=JD_TJ
 $$
 i.e. \eqref{E:DTstar} holds. This implies
 $$
 J\sD_{T^*}\subset\sD_{T}\text{ and }J\sD_T\subset\sD_{T^*}.
 $$
 Hence from the last two formulas we get
 $$
 \sD_{T^*}=J(J\sD_{T^*})\subset J\sD_T\subset\sD_{T^*}
 $$
 and similarly
 $$
 \sD_{T}=J(J\sD_{T})\subset J\sD_{T^*}\subset\sD_{T}.
 $$
 The formula
 $$
  J_TD_T^2=J-T^*JT=J(J-TJT^*)J=JJ_{T^*}D_{T^*}^2J=JJ_{T^*}JD_T^2JJ=JJ_{T^*}JD_T^2
 $$
 yields the equation \eqref{E:JTstar}.

 The relation \eqref{E:LTstar} follows from
 $$
  D_TL_{T^*}=T^*JD_{T^*}\uphar\sD_{T^*}=JTJD_TJ\uphar\sD_{T^*}=JD_{T^*}L_TJ=D_TJL_TJ.\qedhere
 $$
\end{proof}
\end{subsection}
\begin{subsection}{Lemmas on negative indices of certain block operators}
The first two lemmas are of preparatory nature for the last two lemmas, which are used for the proof of the
main theorem.
\begin{lemma}\label{L:JT}
Let $\begin{pmatrix} J&T\\T&J\end{pmatrix}:\begin{pmatrix}\sH\\
\sH\end{pmatrix}\to\begin{pmatrix}\sH\\ \sH\end{pmatrix}$ be a
selfadjoint operator in the Hilbert space $\sH^2=\sH\oplus\sH$.
Then
$$
\sk|{\begin{pmatrix} J&T\\T&J\end{pmatrix}}|^{1/2}=U\begin{pmatrix}
|J+T|^{1/2}&0\\0&|J-T|^{1/2}\end{pmatrix}U^*,
$$
where $U=\frac{1}{\sqrt{2}}\begin{pmatrix} I&I\\I&-I\end{pmatrix}$ is a unitary operator.

\end{lemma}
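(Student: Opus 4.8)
The plan is to diagonalize the self-adjoint block operator $M:=\begin{pmatrix} J&T\\T&J\end{pmatrix}$ by the given $U$ and then to pull the modulus and the square root through the conjugation by means of the continuous functional calculus. First I would record the elementary properties of $U$: since $\begin{pmatrix}I&I\\I&-I\end{pmatrix}^2=2I$ one has $U^2=I$, and the matrix is visibly symmetric with self-adjoint entries, so $U=U^*=U^{-1}$ is a self-adjoint unitary (an involution). This both verifies the unitarity claimed in the statement and lets me use $U$ and $U^*$ interchangeably.

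The core of the argument is the conjugation $U^*MU$. Multiplying out the three $2\times2$ block matrices (using only $U=U^*$ and block multiplication) gives $U^*MU=\begin{pmatrix} J+T&0\\0&J-T\end{pmatrix}=:D$, a block-diagonal operator whose diagonal entries $J\pm T$ are self-adjoint because $J=J^*$ and $T=T^*$ by hypothesis. Equivalently $M=UDU^*$, and the whole lemma is a consequence of this one displayed identity.

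Next I would invoke two standard facts about the functional calculus. First, for a unitary $U$, a bounded self-adjoint $X$, and any continuous $f$ on $\sigma(X)$ one has $f(UXU^*)=Uf(X)U^*$. Second, the modulus and positive square root of a block-diagonal self-adjoint operator act entrywise, so $|D|^{1/2}=\begin{pmatrix}|J+T|^{1/2}&0\\0&|J-T|^{1/2}\end{pmatrix}$, where $|J\pm T|=((J\pm T)^2)^{1/2}$. Since $M$ is self-adjoint, $|M|^{1/2}=(M^2)^{1/4}$ is the value $f(M)$ for $f(t)=|t|^{1/2}$; applying the first fact to $M=UDU^*$ gives $|M|^{1/2}=U|D|^{1/2}U^*$, and substituting the entrywise expression for $|D|^{1/2}$ yields exactly the asserted identity.

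There is no genuine obstacle here: the proof reduces to a finite block computation followed by a routine appeal to the spectral calculus. The only points deserving explicit mention are that $M$ is self-adjoint (so that $|M|$ is computed as a function of $M$ itself and $f(t)=|t|^{1/2}$ is the relevant function) and that unitary conjugation commutes with $f(\cdot)$; both are standard, so the heart of the matter is simply the conjugation relation $U^*MU=\diag(J+T,\,J-T)$.
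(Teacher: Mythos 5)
Your proposal is correct and follows essentially the same route as the paper: both rest on the conjugation identity $\begin{pmatrix} J&T\\T&J\end{pmatrix}=U\begin{pmatrix} J+T&0\\0&J-T\end{pmatrix}U^{*}$ and then transport the modulus and square root through the unitary conjugation. The only cosmetic difference is that the paper squares and then extracts roots twice, whereas you invoke the continuous functional calculus with $f(t)=|t|^{1/2}$ directly; these are the same argument.
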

\begin{proof}
It is easy to check that

\begin{equation}\label{Jpm}
\begin{pmatrix} J&T\\T&J\end{pmatrix}=U\begin{pmatrix}
J+T&0\\0&J-T\end{pmatrix}U^*.
\end{equation}
Then by taking the modulus one gets
$$
\sk|{\begin{pmatrix} J&T\\T&J\end{pmatrix}}|^2=\sk({\begin{pmatrix}
J&T\\T&J\end{pmatrix}^*\begin{pmatrix}
J&T\\T&J\end{pmatrix}})=U\begin{pmatrix}
|J+T|^2&0\\0&|J-T|^2\end{pmatrix}U^*.
$$
The last step is to extract the square roots (twice) from the both sides of
the equation:
$$
\sk|{\begin{pmatrix} J&T\\T&J\end{pmatrix}}|^{1/2}=U\begin{pmatrix}
|J+T|^{1/2}&0\\0&|J-T|^{1/2}\end{pmatrix}U^*.
$$
The right hand side can be written in this form because $U$ is
 unitary.
\end{proof}

\begin{lemma}\label{LJT2}
 Let $T=T^*\in \sH$ be a selfadjoint operator in a Hilbert space $\sH$ and let $J=J^*=J^{-1}$ be a fundamental symmetry in $\sH$ with $\nu_-(J)<\infty$.
 Then
 \begin{equation}\label{E:J2}
  \nu_-(J-TJT)+\nu_-(J)=\nu_-(J-T)+\nu_-(J+T).
 \end{equation}
 In particular, $\nu_-(J-TJT)<\infty$ if and only if $\nu_-(J\pm T)<\infty$.
\end{lemma}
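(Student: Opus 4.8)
The plan is to evaluate the negative index of the selfadjoint block operator
\[
 H=\begin{pmatrix} J&T\\ T&J\end{pmatrix}
\]
on $\sH\oplus\sH$ in two different ways and then to compare the two expressions; equating them yields \eqref{E:J2} at once. Note that $H$ is a bounded selfadjoint operator in the Hilbert space $\sH\oplus\sH$, so $\nu_-(H)$ is well defined.

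First I would compute $\nu_-(H)$ using Lemma~\ref{L:JT}. The diagonalization \eqref{Jpm} exhibits $H$ as unitarily equivalent, via the unitary $U$, to $\diag(J+T,\,J-T)$. Since unitary equivalence preserves the negative spectrum, and the negative index of a block-diagonal selfadjoint operator is the sum of the negative indices of its diagonal blocks, this gives $\nu_-(H)=\nu_-(J+T)+\nu_-(J-T)$, which is the right-hand side of \eqref{E:J2}. Second, using that $J$ is boundedly invertible with $J^{-1}=J$, I would perform a block Schur factorization
\[
 H=\begin{pmatrix} I&0\\ TJ&I\end{pmatrix}
 \begin{pmatrix} J&0\\ 0&J-TJT\end{pmatrix}
 \begin{pmatrix} I&JT\\ 0&I\end{pmatrix},
\]
in which the outer factors are mutually adjoint and boundedly invertible (the Schur complement of the $(1,1)$-block $J$ is exactly $J-TJ^{-1}T=J-TJT$). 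This is a congruence $H=R^{*}\diag(J,\,J-TJT)R$ with $R$ boundedly invertible, so it preserves the negative index, giving $\nu_-(H)=\nu_-(J)+\nu_-(J-TJT)$, the left-hand side of \eqref{E:J2}.

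Equating the two evaluations of $\nu_-(H)$ proves \eqref{E:J2}. For the ``in particular'' claim, all four quantities are nonnegative and $\nu_-(J)<\infty$ by hypothesis, so the left-hand side of \eqref{E:J2} is finite precisely when $\nu_-(J-TJT)<\infty$, while the right-hand side is finite precisely when both $\nu_-(J-T)$ and $\nu_-(J+T)$ are finite; since the two sides agree, the asserted equivalence follows.

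The only step requiring care --- and the one I would single out as the crux --- is the invariance of the negative index under the congruence $H\mapsto R^{*}HR$ by a bounded, boundedly invertible operator $R$. This is the operator-theoretic form of Sylvester's law of inertia for bounded selfadjoint operators, and it is precisely the principle already invoked (for a triangular congruence) in the proof of Theorem~\ref{thmB}; once it is in hand, both computations are purely formal.
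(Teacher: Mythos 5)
Your proposal is correct and follows essentially the same route as the paper: it computes the negative index of the block operator $\begin{pmatrix} J&T\\ T&J\end{pmatrix}$ once via the unitary diagonalization \eqref{Jpm} and once via the Schur-type congruence \eqref{E:JT2}, then equates the two. The only difference is that you spell out the inertia-invariance step explicitly, which the paper leaves implicit.
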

 \begin{proof}
  Consider block operators
  $\begin{pmatrix} J&T\\T&J\end{pmatrix}$ and $\begin{pmatrix}
  J+T&0\\0&J-T\end{pmatrix}$. Equality  \eqref{Jpm} yields \- $\nu_-\begin{pmatrix} J&T\\T&J\end{pmatrix}=\nu_-\begin{pmatrix} J+T&0\\0&J-T\end{pmatrix}$. The negative index of $\begin{pmatrix}
  J+T&0\\0&J-T\end{pmatrix}$ equals $\nu_-(J-T)+\nu_-(J+T)$ and the negative index of $\begin{pmatrix}
  J&T\\T&J\end{pmatrix}$ is easy to find by using the equality
  \begin{equation}\label{E:JT2}
   \begin{pmatrix} J&T\\T&J\end{pmatrix}=\begin{pmatrix} I&0\\TJ&I\end{pmatrix}\begin{pmatrix} J&0\\0&J-TJT\end{pmatrix}\begin{pmatrix} I&JT\\0&I\end{pmatrix}.
 \end{equation}
  Then one gets \eqref{E:J2}.
\end{proof}

Let $(\sH_i,(J_i\cdot,\cdot))$ $(i=1,2)$ and
$(\sH,(J\cdot,\cdot))$ be Pontryagin spaces, where
$\sH=\sH_1\oplus\sH_2$ and $J=\begin{pmatrix}   J_1&0\\0&J_2
\end{pmatrix}$. Consider an operator
$T_{11}=T_{11}^{[*]}\in[(\sH_1,J_1)]$ such that
$\nu_-[I-T_{11}^2]=\kappa<\infty$; see \eqref{indform}. Denote $\wt T_{11}=J_1T_{11}$,
then $\wt T_{11}=\wt T_{11}^*$ in the Hilbert space $\sH_1$. Rewrite
$$
\nu_-[I-T_{11}^2]=\nu_-(J_1(I-T_{11}^2))=\nu_-(J_1-\wt T_{11}J_1\wt T_{11})=\nu_-((J_1-\wt
T_{11})J_1(J_1+\wt T_{11})).
$$
Furthermore, denote
\begin{equation}\label{E:J}
\begin{split}
 &J_+=\sign(J_1(I-T_{11}))=\sign(J_1-\wt T_{11}),\\
 &J_-=\sign(J_1(I+T_{11}))=\sign(J_1+\wt T_{11}),\\
&J_{11}=\sign(J_1(I-T_{11}^2))
\end{split}
\end{equation}
and let $\kappa_+=\nu_-[I-T_{11}]$ and $\kappa_-=\nu_-[I+T_{11}]$. Notice that
$|I\mp T_{11}|=|J_1\mp \wt T_{11}|$ and one has polar decompositions
\begin{equation}\label{E:Jb}
I\mp T_{11}=J_1J_\pm|I\mp T_{11}|.
\end{equation}
\begin{lemma}\label{L:wT}
Let $T_{11}=T_{11}^{[*]}\in[(\sH_1,J_1)]$ and $ T=
\begin{pmatrix}  T_{11}& T_{12}\\ T_{21}& T_{22}\end{pmatrix}\in [(\sH,J)] $ be a
selfadjoint extension of the operator $ T_{11}$ with $\nu_-[I\pm T_{11}]<\infty$ and $\nu_-(J)<\infty$. Then the following
statements
\begin{enumerate}
\item[(i)]
$\nu_-[I\pm T_{11}]=\nu_-[I\pm T];$
\end{enumerate}
\begin{enumerate}
\item[(ii)]
$\nu_-[I- T^2]=\nu_-[I-T_{11}^2]-\nu_-(J_2);$
\item[(iii)]
  $\ran J_1T_{21}^{[*]}\subset \ran|I\pm T_{11}|^{1/2}$
\end{enumerate}
are connected by the implications
$(i)\Leftrightarrow(ii)\Rightarrow(iii)$.
\end{lemma}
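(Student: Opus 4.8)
The plan is to push everything down to the underlying Hilbert space by passing to the genuinely selfadjoint operators $\wt T=JT$ and $\wt T_{11}=J_1T_{11}$, and then to apply Lemma~\ref{LJT2} twice. Recall the translations $\nu_-[I\pm T]=\nu_-(J\pm\wt T)$, $\nu_-[I\pm T_{11}]=\nu_-(J_1\pm\wt T_{11})$, and, as computed just before the statement, $\nu_-[I-T^2]=\nu_-(J-\wt TJ\wt T)$ and $\nu_-[I-T_{11}^2]=\nu_-(J_1-\wt T_{11}J_1\wt T_{11})$. Since $\nu_-(J)<\infty$ is assumed and $\nu_-(J_1)\le\nu_-(J)<\infty$, Lemma~\ref{LJT2} applies to $\wt T$ in $(\sH,J)$ and to $\wt T_{11}$ in $(\sH_1,J_1)$, yielding
\[
 \nu_-[I-T^2]+\nu_-(J)=\nu_-[I-T]+\nu_-[I+T],
\]
\[
 \nu_-[I-T_{11}^2]+\nu_-(J_1)=\nu_-[I-T_{11}]+\nu_-[I+T_{11}].
\]

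Eliminating $\nu_-[I-T^2]$ and $\nu_-[I-T_{11}^2]$ between these two identities and statement (ii), and using $\nu_-(J)=\nu_-(J_1)+\nu_-(J_2)$, the terms $\nu_-(J)$ and $\nu_-(J_1)+\nu_-(J_2)$ cancel and (ii) is seen to be \emph{equivalent} to the single scalar equality
\[
 \nu_-[I-T]+\nu_-[I+T]=\nu_-[I-T_{11}]+\nu_-[I+T_{11}].\qquad(\star)
\]
Since (i) asserts the two separate equalities $\nu_-[I\pm T_{11}]=\nu_-[I\pm T]$, it trivially gives $(\star)$ and hence (ii); this settles $(i)\Rightarrow(ii)$.

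For the converse $(ii)\Rightarrow(i)$, the decisive extra ingredient is a compression (monotonicity) inequality. Writing $J\pm\wt T=J\pm JT$ as a $2\times2$ block operator over $\sH_1\oplus\sH_2$, its $(1,1)$-block is precisely $J_1\pm\wt T_{11}$; as $J\pm\wt T$ is selfadjoint in the Hilbert space $\sH$, the standard fact that a compression cannot increase the negative index gives $\nu_-(J_1\pm\wt T_{11})\le\nu_-(J\pm\wt T)$, i.e. $\nu_-[I\pm T_{11}]\le\nu_-[I\pm T]$. Assuming (ii), the right-hand side of $(\star)$ equals the finite quantity $\nu_-[I-T_{11}^2]+\nu_-(J_1)$, so by the ``in particular'' clause of Lemma~\ref{LJT2} both $\nu_-[I\pm T]$ are finite; the two one-sided inequalities together with the equality of sums $(\star)$ then force $\nu_-[I\pm T_{11}]=\nu_-[I\pm T]$ \emph{separately}, which is (i). This combination of the two compression bounds with $(\star)$ and the finiteness is the only place where genuine operator-theoretic input (beyond index bookkeeping) enters, and is where I expect the main care to be needed.

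Finally, for $(i)\Rightarrow(iii)$ I would invoke Theorem~\ref{T:1}. Consider the incomplete block operators $A_\pm^0=\begin{pmatrix}I\pm T_{11}&\pm T_{12}\\ \pm T_{21}&\ast\end{pmatrix}$, for which $A_{11}=I\pm T_{11}=A_{11}^{[*]}$ with $\nu_-[A_{11}]=\nu_-[I\pm T_{11}]<\infty$, and $A_{21}=A_{12}^{[*]}$ since $T=T^{[*]}$. Under (i) the operator $I\pm T$ is itself a completion of $A_\pm^0$ (with $(2,2)$-block $I\pm T_{22}$) satisfying $\nu_-[I\pm T]=\nu_-[I\pm T_{11}]=\nu_-[A_{11}]$, so the completion problem of Theorem~\ref{T:1}(i) is solvable; by the equivalence asserted there this forces $\ran J_1(\pm T_{12})\subset\ran|I\pm T_{11}|^{1/2}$. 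Because $T_{12}=T_{21}^{[*]}$ and the scalar factor $\pm1$ does not affect ranges, this is exactly $\ran J_1T_{21}^{[*]}\subset\ran|I\pm T_{11}|^{1/2}$, which is (iii), completing the chain $(i)\Leftrightarrow(ii)\Rightarrow(iii)$.
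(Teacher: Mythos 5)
Your proof is correct, and for the core equivalence $(i)\Leftrightarrow(ii)$ it is in substance the paper's own argument: the paper also reduces to the Hilbert-space operators $\wt T=JT$, $\wt T_{11}=J_1T_{11}$, converts (ii) into the equality of the sums $\nu_-(J\pm\wt T)$ versus $\nu_-(J_1\pm\wt T_{11})$ via Lemma~\ref{LJT2} (phrased there through the block identity \eqref{E:JT2} and the matrix $\left(\begin{smallmatrix}J&\wt T\\ \wt T&J\end{smallmatrix}\right)$), and then recovers the two separate equalities from the compression bounds $\nu_-(J_1\pm\wt T_{11})\le\nu_-(J\pm\wt T)$ exactly as you do. The only place you diverge is the implication to (iii): the paper proves $(ii)\Rightarrow(iii)$ by permuting a $4\times4$ block matrix and invoking Lemma~\ref{L:JT} to diagonalize $\left|\left(\begin{smallmatrix}J_1&\wt T_{11}\\ \wt T_{11}&J_1\end{smallmatrix}\right)\right|^{1/2}$ into $|J_1\pm\wt T_{11}|^{1/2}$, whereas you prove $(i)\Rightarrow(iii)$ by observing that under (i) the operator $I\pm T$ is itself a completion of $A_\pm^0$ with minimal negative index, so Theorem~\ref{T:1}(i) gives the range inclusion directly. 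Your route is shorter and the paper explicitly endorses it in the closing remark of its proof; the paper's longer permutation argument is carried out only because, as it says, the same computation is reused in the proof of Lemma~\ref{L:ranran}. Both routes are valid since $(i)\Leftrightarrow(ii)$ has already been established.
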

\begin{proof}
The Lemma can be formulated in an equivalent way for the Hilbert
space operators: the
block operator $\wt T=JT= \begin{pmatrix} \wt T_{11}&\wt T_{12}\\\wt
T_{21}&\wt T_{22}\end{pmatrix}$ is a selfadjoint extension of  $\wt T_{11}=\wt T_{11}^*\in[\sH_1]$. Then the following statements
\begin{enumerate}
\item[(i')]
$\nu_-(J_1\pm\wt T_{11})=\nu_-(J\pm\wt T)$
\item[(ii')]
$\nu_-(J-\wt TJ\wt T)=\nu_-(J_1-\wt T_{11}J_1\wt
T_{11})-\nu_-(J_2);$
\item[(iii')]
  $\ran \wt T_{12}\subset\ran|J_1\pm\wt
T_{11}|^{1/2}$
\end{enumerate}
are connected by the implications $(i')\Leftrightarrow(ii')\Rightarrow(iii')$.

 Hence it's  sufficient to prove this form
of the Lemma.

Let us prove the equivalence $(i')\Leftrightarrow(ii')$. Condition
(ii') is equivalent to
\begin{equation}\label{E:nuT}
\nu_-\begin{pmatrix} J_1&\wt T_{11}\\\wt T_{11}&J_1\end{pmatrix}=\nu_-\begin{pmatrix} J&\wt T\\\wt T&J\end{pmatrix}.
\end{equation}
Indeed, in view of \eqref{E:JT2}
$$
\nu_-\begin{pmatrix} J_1&\wt T_{11}\\\wt
T_{11}&J_1\end{pmatrix}=\nu_-(J_1)+\nu_-(J_1-\wt T_{11}J_1\wt
T_{11})
$$
and
\begin{equation*}
\begin{split}
\nu_-\begin{pmatrix} J&\wt T\\\wt T&J\end{pmatrix}=\nu_-(J)+\nu_-(J-\wt TJ\wt T)=\nu_-(J_1)+\nu_-(J_2)+\nu_-(J-\wt TJ\wt T).
\end{split}
\end{equation*}
By using Lemma \ref{LJT2}, equality \eqref{E:nuT} is equivalent to
\begin{equation}\label{itoii}
\nu_-(J_1-\wt T_{11})+\nu_-(J_1+\wt T_{11})=\nu_-(J-\wt
T)+\nu_-(J+\wt T).
\end{equation}
Hence, $(i')\Rightarrow(ii')$.

 Because $\nu_-(J_1\pm\wt
T_{11})\leq\nu_-(J\pm\wt T)$, then \eqref{itoii} shows that
$(ii')\Rightarrow(i')$.

Now we prove implication $(ii')\Rightarrow(iii')$;the arguments here will be useful also for the proof of Lemma \ref{L:ranran} below. Use a permutation to transform the matrix
in the right hand side of \eqref{E:nuT}:
$$
\nu_-\begin{pmatrix} J&\wt T\\\wt T&J\end{pmatrix}=
\nu_-\begin{pmatrix}
 J_1&0&\wt T_{11}&\wt T_{12}\\
 0&J_2&\wt T_{21}&\wt T_{22}\\
 \wt T_{11}&\wt T_{12}&J_1&0\\
 \wt T_{21}&\wt T_{22}&0&J_2
 \end{pmatrix}=
 \nu_-\begin{pmatrix}
 J_1&\wt T_{11}&0&\wt T_{12}\\
 \wt T_{11}&J_1&\wt T_{12}&0\\
 0&\wt T_{21}&J_2&\wt T_{22}\\
 \wt T_{21}&0&\wt T_{22}&J_2
 \end{pmatrix}.
$$
 Then condition \eqref{E:nuT} implies to the condition
$$
\ran \begin{pmatrix} 0&\wt T_{12}\\\wt T_{12}&0\end{pmatrix}\subset\ran\sk|{\begin{pmatrix} J_1&\wt T_{11}\\\wt T_{11}&J_1\end{pmatrix}}|^{1/2};
$$
(see Theorem \ref{T:1}). By  Lemma \ref{L:JT} the last inclusion can be rewritten as
\begin{equation*}\label{bigran}
\ran \begin{pmatrix} 0&\wt T_{12}\\\wt
T_{12}&0\end{pmatrix}\subset\ran U\begin{pmatrix} |J_1+\wt
T_{11}|^{1/2}&0\\0&|J_1-\wt T_{11}|^{1/2}\end{pmatrix}U^*,
\end{equation*}
where $U=\frac{1}{\sqrt{2}}\begin{pmatrix} I&I\\I&-I\end{pmatrix}$
is a unitary operator. This inclusion is equivalent to
\begin{equation*}\label{bigran2}
\ran U^*\begin{pmatrix} 0&\wt T_{12}\\\wt
T_{12}&0\end{pmatrix}U=\ran\begin{pmatrix} \wt T_{12}&0\\0&-\wt
T_{12}\end{pmatrix}\subset\ran \begin{pmatrix} |J_1+\wt
T_{11}|^{1/2}&0\\0&|J_1-\wt T_{11}|^{1/2}\end{pmatrix}
\end{equation*}
and clearly this is equivalent to condition (iii').

 Note that if $\wt T_{11}$ has a selfadjoint
extension $\wt T$ satisfying (i'). Then by applying Theorem \ref{T:1} (or \cite[Theorem 1]{BH2015}) it yields (iii').
\end{proof}
\begin{lemma}\label{L:ranran}
Let $T_{11}=T_{11}^{[*]}\in[(\sH_1,J_1)]$ be an operator and let
$$T_1=\begin{pmatrix}   T_{11}\\T_{21}
\end{pmatrix}:(\sH_1,J_1)\to \begin{pmatrix}  (\sH_1,J_1)\\(\sH_2,J_2)
\end{pmatrix}$$
 be an extension of $T_{11}$ with $\nu_-[I-T_{11}^2]<\infty$, $\nu_-(J_1)<\infty$, and $\nu_-(J_2)<\infty$. Then for the conditions
\begin{enumerate}
  \item[(i)]
  $\nu_-[I_1-T_{11}^2]=\nu_-[I_1-T_{1}^{[*]}T_{1}]+\nu_-(J_2)$;
  \item[(ii)]
  $\ran J_1T_{21}^{[*]}\subset \ran|I-T_{11}^2|^{1/2};$
  \item[(iii)]
  $\ran J_1T_{21}^{[*]}\subset \ran|I\pm T_{11}|^{1/2}$
  \end{enumerate}
  the implications $(i)\Rightarrow(ii)$ and $(i)\Rightarrow(iii)$ hold.
\end{lemma}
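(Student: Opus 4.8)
The plan is to transport everything into the Hilbert space $\sH$ via $\wt T_1 = J T_1$ (so $\wt T_{11} = J_1 T_{11}$, $\wt T_{21} = J_2 T_{21}$), exactly as in Lemma~\ref{L:wT}, and then to read each of (i)--(iii) off as a statement about the negative index or a range inclusion for explicit Hilbert-space block operators. First I would record the algebraic dictionary. Since $T_{11}=T_{11}^{[*]}$, a direct computation gives $T_1^{[*]}=(T_{11},\,J_1 T_{21}^* J_2)$, hence
\[
 J_1(I-T_1^{[*]}T_1)=J_1-\wt T_1^{*}J\wt T_1,\qquad J_1(I-T_{11}^2)=J_1-\wt T_{11}J_1\wt T_{11},
\]
with $\wt T_1=\col(\wt T_{11},\wt T_{21})$. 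As $(J_1C)^{*}(J_1C)=C^{*}C$ for every $C$, the moduli are unchanged, so $\ran|I-T_{11}^2|^{1/2}=\ran|J_1-\wt T_{11}J_1\wt T_{11}|^{1/2}$ and, by \eqref{E:Jb}, $\ran|I\pm T_{11}|^{1/2}=\ran|J_1\pm\wt T_{11}|^{1/2}$; finally $J_1T_{21}^{[*]}=\wt T_{21}^{*}$. Thus (i) reads $\nu_-(J_1-\wt T_{11}J_1\wt T_{11})=\nu_-(J_1-\wt T_1^{*}J\wt T_1)+\nu_-(J_2)$, while (ii) and (iii) become $\ran\wt T_{21}^{*}\subset\ran|J_1-\wt T_{11}J_1\wt T_{11}|^{1/2}$ and $\ran\wt T_{21}^{*}\subset\ran|J_1\pm\wt T_{11}|^{1/2}$, respectively.

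For (i)$\Rightarrow$(ii) I would introduce the Hilbert-space selfadjoint block operator
\[
 H=\begin{pmatrix} J_1-\wt T_{11}J_1\wt T_{11} & \wt T_{21}^{*}\\ \wt T_{21} & J_2\end{pmatrix},
\]
and compute $\nu_-(H)$ by the congruence underlying \eqref{E:JT2}: since $J_2$ is invertible, $\nu_-(H)=\nu_-(J_2)+\nu_-\big(J_1-\wt T_{11}J_1\wt T_{11}-\wt T_{21}^{*}J_2\wt T_{21}\big)$, and the Schur complement appearing here is exactly $J_1-\wt T_1^{*}J\wt T_1$. Feeding in hypothesis~(i) collapses this to $\nu_-(H)=\nu_-(J_1-\wt T_{11}J_1\wt T_{11})$, i.e.\ $\nu_-(H)$ equals the negative index of its $(1,1)$ corner. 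Hence $H$ is a negative-index-preserving completion of the incomplete block operator with that corner, and \cite[Theorem~1]{BH2015} (the Hilbert-space instance of Theorem~\ref{T:1}) forces $\ran\wt T_{21}^{*}\subset\ran|J_1-\wt T_{11}J_1\wt T_{11}|^{1/2}$, which is (ii).

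For (i)$\Rightarrow$(iii) the same bookkeeping is applied to the larger operator
\[
 P=\begin{pmatrix} J_1 & \wt T_1^{*}\\ \wt T_1 & J\end{pmatrix}
  =\begin{pmatrix} J_1 & \wt T_{11} & \wt T_{21}^{*}\\ \wt T_{11} & J_1 & 0\\ \wt T_{21} & 0 & J_2\end{pmatrix}
\]
on $\sH_1\oplus\sH$. Taking the Schur complement of the invertible corner $J$ gives $\nu_-(P)=\nu_-(J)+\nu_-(J_1-\wt T_1^{*}J\wt T_1)$, and (i) together with $\nu_-(J)=\nu_-(J_1)+\nu_-(J_2)$ reduces this to $\nu_-(P)=\nu_-(J_1)+\nu_-(J_1-\wt T_{11}J_1\wt T_{11})$, which by \eqref{E:JT2} equals $\nu_-\!\left(\begin{smallmatrix}J_1&\wt T_{11}\\ \wt T_{11}&J_1\end{smallmatrix}\right)$. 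Grouping the two copies of $\sH_1$ as one corner, $P$ is then a negative-index-preserving completion with corner $\left(\begin{smallmatrix}J_1&\wt T_{11}\\ \wt T_{11}&J_1\end{smallmatrix}\right)$ and coupling $\begin{pmatrix}\wt T_{21}^{*}\\ 0\end{pmatrix}$, so Theorem~\ref{T:1} yields $\ran\begin{pmatrix}\wt T_{21}^{*}\\0\end{pmatrix}\subset\ran\left|\begin{pmatrix}J_1&\wt T_{11}\\ \wt T_{11}&J_1\end{pmatrix}\right|^{1/2}$. Diagonalizing the modulus by Lemma~\ref{L:JT} and multiplying by the unitary $U^{*}=\tfrac{1}{\sqrt2}\begin{pmatrix}I&I\\ I&-I\end{pmatrix}$ sends $\begin{pmatrix}\wt T_{21}^{*}\\ 0\end{pmatrix}$ to $\tfrac{1}{\sqrt2}\begin{pmatrix}\wt T_{21}^{*}\\ \wt T_{21}^{*}\end{pmatrix}$, so the inclusion splits into $\ran\wt T_{21}^{*}\subset\ran|J_1+\wt T_{11}|^{1/2}$ and $\ran\wt T_{21}^{*}\subset\ran|J_1-\wt T_{11}|^{1/2}$, i.e.\ (iii).

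The genuinely delicate point is the common index identity: once (i) is rewritten as $\nu_-(J_1-\wt T_1^{*}J\wt T_1)=\nu_-(J_1-\wt T_{11}J_1\wt T_{11})-\nu_-(J_2)$, the heart of the matter is to recognize this \emph{same} Schur complement $J_1-\wt T_1^{*}J\wt T_1$ inside two different ambient operators $H$ and $P$ so that in each case the total negative index drops precisely to that of the chosen corner. Verifying the two congruence factorizations and keeping the $J_i$-insertions straight is the step demanding care; after that Theorem~\ref{T:1} handles (ii) directly, and the only extra subtlety for (iii) is the passage through Lemma~\ref{L:JT}, which is exactly what converts the single range condition on the $2\times2$ corner into the two one-sided inclusions.
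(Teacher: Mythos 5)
Your proposal is correct and follows essentially the same route as the paper: rewrite (i) as a statement that a suitable Hilbert-space block operator is a negative-index-preserving completion of its corner (via the Schur-complement congruence \eqref{E:JT2}), apply \cite[Theorem 1]{BH2015}, and for (iii) diagonalize the modulus of $\bigl(\begin{smallmatrix}J_1&\wt T_{11}\\ \wt T_{11}&J_1\end{smallmatrix}\bigr)$ with Lemma \ref{L:JT}. The only cosmetic differences are that the paper dispatches (i)$\Rightarrow$(ii) by citing Theorem \ref{thmB} (whose proof is exactly your argument with $H$) and places the coupling $\wt T_{21}^*$ in the other $\sH_1$-slot of the permuted $3\times 3$ matrix, which changes nothing.
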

\begin{proof}
 First we prove that (i)$\Rightarrow$(ii). In fact, this follows from Theorem \ref{thmB}  by taking $A=I-T_{11}^2$
  and $B=T_{21}$.

  A proof of (i)$\Rightarrow$(iii) is quite similar to the
  proof used in Lemma \ref{L:wT}. Statement (i) is equivalent the
  following equation:
  \begin{equation*}\label{JtJt}
   \nu_-\begin{pmatrix}
     J_1&\wt T_{11}\\
     \wt T_{11}&J_1
   \end{pmatrix}=\nu_-\begin{pmatrix}
     J&\wt T_1\\
     \wt T_1^*&J_1
   \end{pmatrix}.
  \end{equation*}
  Indeed,
\begin{equation*}
  \begin{split}
   &\nu_-\begin{pmatrix}
     J_1&\wt T_{11}\\
     \wt T_{11}&J_1
   \end{pmatrix}=
   \nu_-\begin{pmatrix}
     J_1&0\\
     0&J_1-\wt T_{11}J_1\wt T_{11}
   \end{pmatrix}\\
   &=\nu_-(J_1-\wt T_{11}J_1\wt T_{11})+
   \nu_-(J_1)<\infty
   \end{split}
  \end{equation*}
  and
  \begin{equation*}
  \begin{split}
   &\nu_-\begin{pmatrix}
     J&\wt T_1\\
     \wt T_1^*&J_1
   \end{pmatrix}=
   \nu_-\begin{pmatrix}
     J&0\\
     0&J_1-\wt T_1^*J\wt T_1
   \end{pmatrix}\\
   &=\nu_-(J_1-\wt T_{11}J_1\wt T_{11}-\wt T_{21}^*J_2\wt T_{21})+
   \nu_-(J_1)+\nu_-(J_2).
   \end{split}
  \end{equation*}
 Due to (i) the right hand sides coincide and then the left hand
sides coincide as well.

Now let us permutate the matrix in the latter equation.
\begin{equation*}
  \begin{split}
   \nu_-\begin{pmatrix}
     J&\wt T_1\\
     \wt T_1^*&J_1
   \end{pmatrix}=
   \nu_-\begin{pmatrix}
     J_1&0&\wt T_{11}\\
     0&J_2&\wt T_{21}\\
     \wt T_{11}&\wt T_{21}^*&J_1
   \end{pmatrix}
   =\nu_-\begin{pmatrix}
     J_1&\wt T_{11}&0\\
     \wt T_{11}&J_1&\wt T_{21}^*\\
     0&\wt T_{21}&J_2
   \end{pmatrix}.
\end{split}
  \end{equation*}
 It follows from \cite[Theorem 1]{BH2015} that the
condition (i) implies the condition
\begin{equation*}
  \ran\begin{pmatrix}0\\\wt T_{21}^*\end{pmatrix}\subset
  \ran\sk|{\begin{pmatrix}J_1&\wt T_{11}\\\wt
  T_{11}&J_1\end{pmatrix}}|^{1/2}=
  \ran U\begin{pmatrix}|J_1+\wt T_{11}|^{1/2}&0\\0&|J_1-\wt
  T_{11}|^{1/2}\end{pmatrix}U^*,
\end{equation*}
where $U=\frac{1}{\sqrt{2}}\begin{pmatrix} I&I\\I&-I\end{pmatrix}$
is a unitary operator (see Lemma \ref{L:JT}). Then, equivalently,
$$
\ran \wt T_{21}^*\subset \ran|J_1\pm \wt T_{11}|^{1/2}.\qedhere
$$
\end{proof}
\end{subsection}
\begin{subsection}{Contractive extensions of contractions with minimal negative indices}
Following to \cite{BH2015, HMS04, KM1} we consider the problem of existence and a description of
selfadjoint operators $T$ in the Pontryagin space $\begin{pmatrix}
(\sH_1,J_1)\\
(\sH_2,J_2)
\end{pmatrix} $
 such that $
A_+=I+T$ and $ A_-=I-T$ solve the corresponding completion
problems
\begin{equation}\label{E:A}
A_{\pm}^0=
\begin{pmatrix}
 I\pm T_{11}&\pm T_{21}^{[*]}\\
 \pm T_{21}&\ast
\end{pmatrix},
\end{equation}
under \emph{minimal index conditions} $\nu_-[I+T]=\nu_-[I+T_{11}]$,
$\nu_-[I-T]=\nu_-[I-T_{11}]$, respectively. Observe, that by
Lemma~\ref{L:wT} the two minimal index conditions above are
equivalent to single condition
$\nu_-[I-T^2]=\nu_-[I-T_{11}^2]-\nu_-(J_2)$.

It is
clear from Theorem~\ref{T:1} that the conditions $\ran
J_1T_{21}^{[*]}\subset \ran|I-T_{11}|^{1/2}$ and $\ran J_1T_{21}^{[*]}\subset
\ran|I+T_{11}|^{1/2}$ are necessary for the existence of solutions;
however as noted already in \cite{BH2015} they are not sufficient even in the Hilbert space setting.


 The next theorem gives a
general solvability criterion for the completion problem \eqref{E:A}
and describes all solutions to this problem. As in the definite
case, there are minimal solutions $A_+$ and $A_-$ which are
connected to two extreme selfadjoint extensions $T$ of
\begin{equation}\label{Tcol}
T_1=\begin{pmatrix} T_{11}\\ T_{21}
\end{pmatrix}:(\sH_1,J_1)\to\begin{pmatrix}(\sH_1,J_1)\\(\sH_2,J_2)\end{pmatrix},
\end{equation}
now with finite negative index
$\nu_-[I-T^2]=\nu_-[I-T_{11}^2]-\nu_-(J_2)>0$. The set of solutions
$T$ to the problem \eqref{E:A} will be denoted by
$\Ext_{T_1,\kappa}(-1,1)_{J_2}$.

\begin{theorem}\label{T:contr}
Let $T_1$ be a symmetric operator as in \eqref{Tcol} with
$T_{11}=T_{11}^{[*]}\in[(\sH_1,J_1)]$ and
$\nu_-[I-T_{11}^2]=\kappa<\infty$, and let
$J_{T_{11}}=\sign(J_1(I-T_{11}^2))$. Then the completion problem for
$A_{\pm}^0$ in \eqref{E:A} has a solution $I\pm T$ for some
$T=T^{[*]}$ with $\nu_-[I-T^2]=\kappa-\nu_-(J_2)$ if and only if the
following condition is satisfied:
\begin{equation}\label{crit}
 \nu_-[I-T_{11}^2]=\nu_-[I-T_1^{[*]}T_1]+\nu_-(J_2).
\end{equation}
If this condition is satisfied then the following facts hold:
\begin{enumerate}\def\labelenumi{\rm(\roman{enumi})}
\item The completion problems for $A_{\pm}^0$ in
\eqref{E:A} have "minimal" solutions $A_\pm$ (for the partial ordering
 introduced in the first section).

\item The operators $T_m:=A_+-I$ and $T_M:=I-A_-\in
\Ext_{T_1,\kappa}(-1,1)_{J_2}$.

\item The operators $T_m$ and $T_M$ have the block form
\begin{equation}\label{E:T}
\begin{split}
&T_m=
\begin{pmatrix}
T_{11}&J_1D_{T_{11}}V^{*}\\
 J_2VD_{T_{11}}&-I+J_2V(I-L^*_TJ_1)J_{11}V^{*}
\end{pmatrix},\\
&T_M=
\begin{pmatrix}
T_{11}&J_1D_{T_{11}}V^{*}\\
 J_2VD_{T_{11}}&I-J_2V(I+L^*_TJ_1)J_{11}V^{*}
\end{pmatrix},
\end{split}
\end{equation}
where $D_{T_{11}}:=|I-T_{11}^2|^{1/2}$ and $V$ is given by
$V:=\clos(J_2T_{21}D_{T_{11}}^{[-1]})$.

\item The operators $T_m$ and $T_M$ are "extremal" extensions of $T_1$:
\begin{equation}\label{E:Ext}
T\in\Ext_{T_1,\kappa}(-1,1)_{J_2}\  \text{  iff  }\
T=T^{[*]}\in[(\sH,J)],\quad T_m\le_{J_2} T\le_{J_2} T_M.
\end{equation}

\item The operators $T_m$ and $T_M$ are connected via
\begin{equation}\label{E:5}
(-T)_m=-T_M, \quad (-T)_M=-T_m.
\end{equation}
\end{enumerate}
\end{theorem}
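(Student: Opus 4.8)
The plan is to handle the two completion problems in \eqref{E:A} simultaneously, extracting the solvability criterion from Theorem~\ref{thmB} and building the extremal solutions from the minimal completions supplied by Theorem~\ref{T:1}. I begin from the identity $I-T_1^{[*]}T_1=(I-T_{11}^2)-T_{21}^{[*]}T_{21}$: applying Theorem~\ref{thmB} with $A=I-T_{11}^2$ and $B=T_{21}$ shows that the criterion \eqref{crit} is precisely the inertia equality $\nu_-[A]=\nu_-[A-B^{[*]}B]+\nu_-(J_2)$, and is therefore equivalent to a factorization $J_1T_{21}^{[*]}=D_{T_{11}}K$ with a unique $J$-contraction $K$; a short computation gives $K=V^{*}$, where $V=\clos(J_2T_{21}D_{T_{11}}^{[-1]})$. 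For the necessity of \eqref{crit} I would argue by inertia alone: $I-T_1^{[*]}T_1$ is the compression of $I-T^2$ to $(\sH_1,J_1)$, whence $\nu_-[I-T_1^{[*]}T_1]\le\nu_-[I-T^2]$, while subadditivity together with the factorization bound of Section~\ref{sec2} applied to the same decomposition gives $\nu_-[I-T_{11}^2]\le\nu_-[I-T_1^{[*]}T_1]+\nu_-(J_2)$; combined with Lemma~\ref{L:wT}, which converts the two minimal index conditions into $\nu_-[I-T^2]=\nu_-[I-T_{11}^2]-\nu_-(J_2)$, these two inequalities force equality in \eqref{crit}. Sufficiency will be delivered by the explicit construction below, and Lemma~\ref{L:ranran} supplies the range inclusions $\ran J_1T_{21}^{[*]}\subset\ran|I\pm T_{11}|^{1/2}$ that Theorem~\ref{T:1} requires.

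Granting \eqref{crit}, I apply Theorem~\ref{T:1} separately to $A_+^0$ (top-left block $I+T_{11}$, signature $J_-$) and to $A_-^0$ (top-left block $I-T_{11}$, signature $J_+$), with $J_\pm$ as in \eqref{E:J}. This yields minimal completions with free blocks $S_+^{[*]}J_1J_-S_+$ and $S_-^{[*]}J_1J_+S_-$, where $S_\pm=|I\pm T_{11}|^{[-1/2]}J_1(\pm T_{21}^{[*]})$, hence minimal solutions $A_+$ and $A_-$, which is statement~(i). Setting $T_m:=A_+-I$ and $T_M:=I-A_-$ produces the candidate extremal operators of~(ii).

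The heart of the argument, and the step I expect to be the main obstacle, is to recast these minimal blocks in the symmetric link form \eqref{E:T} and to verify \emph{cross-solvability}: that the single operator $T_m$ makes $A_+=I+T_m$ solve $A_+^0$ and simultaneously $A_-=I-T_m$ solve $A_-^0$ with the correct negative indices, and dually for $T_M$. In the definite situation the commutation $T_{11}D_{T_{11}}=D_{T_{11}^{*}}T_{11}$ lets one move the off-diagonal data freely between the defect spaces; here this fails and must be replaced by the link-operator calculus of Definition~\ref{linkoper} and the identities \eqref{Link2}, together with Lemma~\ref{L:DTstar} (which supplies $D_{T^{*}}=JD_TJ$, $J_{T^{*}}=JJ_TJ$, $L_{T^{*}}=JL_TJ$) and the polar decompositions \eqref{E:Jb}. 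Using $J_1T_{21}^{[*]}=D_{T_{11}}V^{*}$ and transporting $S_\pm$ through the link operator, I expect the free blocks to collapse to $-I+J_2V(I-L_T^{*}J_1)J_{11}V^{*}$ and $I-J_2V(I+L_T^{*}J_1)J_{11}V^{*}$, which is \eqref{E:T}; the relations \eqref{Link2} are exactly what shows that the same data $VD_{T_{11}}$ is compatible with both signatures $J_+$ and $J_-$ at once, so that $T_m$ and $T_M$ indeed lie in $\Ext_{T_1,\kappa}(-1,1)_{J_2}$.

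Finally, the extremal description~(iv) comes out of the monotone structure of the solution set in Theorem~\ref{T:1}: solving $A_+^0$ is equivalent to $T\ge_{J_2}T_m$ (its free block ranges over $S_+^{[*]}J_1J_-S_++J_2Y$ with $Y\ge0$), while solving $A_-^0$ is equivalent to $T\le_{J_2}T_M$, so the common solution set is exactly the order interval in \eqref{E:Ext}. The symmetry~(v) is then formal: replacing $T$ by $-T$ interchanges the problems $A_+^0$ and $A_-^0$, so the minimal solution of one problem for $-T$ is the minimal solution of the other for $T$, giving $(-T)_m=-T_M$ and $(-T)_M=-T_m$.
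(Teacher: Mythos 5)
Your proposal follows essentially the same route as the paper: necessity of \eqref{crit} from the inertia bound \eqref{minneg} combined with Lemma~\ref{L:wT}, solvability of each problem $A_{\pm}^0$ via Lemma~\ref{L:ranran} and Theorem~\ref{T:1} with $S_{\pm}=|I\mp T_{11}|^{[-1/2]}J_1T_{21}^{[*]}$, the explicit forms \eqref{E:T} via the link-operator calculus of Definition~\ref{linkoper}, \eqref{Link2} and Lemma~\ref{L:DTstar}, and the order-interval description \eqref{E:Ext} from the structure of the solution set in Theorem~\ref{T:1}. The one point to adjust is your attribution of cross-solvability to the identities \eqref{Link2}: in the paper this step comes instead from the comparison $T_M-T_m=\diag(0,\,2(I-J_2VJ_{11}V^{*}))\ge_{J_2}0$, which rests on the $J$-contractivity of $V^{*}$ furnished by Theorem~\ref{thmB} (precisely where \eqref{crit}, beyond the mere range inclusions, enters), followed by Lemma~\ref{L:wT} to convert the two index conditions into $\nu_-[I-T^2]=\kappa-\nu_-(J_2)$ --- an ingredient you do have in hand from your first paragraph, so this is a misplacement of emphasis rather than a missing idea.
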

\begin{proof}
It is easy to see by \eqref{minneg} that $\kappa=\nu_-[I-T_{11}^2]\le
\nu_-[I-T_1^{[*]}T_1]+\nu_-(J_2)\le \nu_-[I-T^2]+\nu_-(J_2)$. Hence
the condition $\nu_-[I-T^2]=\kappa-\nu_-(J_2)$ implies \eqref{crit}.
The sufficiency of this condition is obtained when proving the
assertions (i)--(iii) below.

(i) If the condition \eqref{crit} is satisfied then by using Lemma \ref{L:ranran} one gets the inclusions $\ran J_1
T_{21}^{[*]}\subset \ran|I\pm T_{11}|^{1/2}$, which by
Theorem~\ref{T:1} means that each of the completion problems,
$A_{\pm}^0$ in \eqref{E:A}, is solvable. It follows that the
operators
\begin{equation}\label{E:S pm}
S_-=|I+T_{11}|^{[-1/2]}J_1T_{21}^{[*]},\quad
S_+=|I-T_{11}|^{[-1/2]}J_1T_{21}^{[*]}
\end{equation}
are well defined and they provide the minimal solutions $A_\pm$ to
the completion problems for $A_\pm^0$ in \eqref{E:A}.

(ii) \& (iii) By Lemma \ref{L:ranran} the inclusion $\ran J_1
T_{21}^{[*]}\subset \ran|I- T_{11}^2|^{1/2}$ holds. This
inclusion is equivalent to the existence of a (unique) bounded
operator $V^{*}=D_{T_{11}}^{[-1]}J_1T_{21}^{[*]}$ with $\ker
V\supset \ker D_{T_{11}}$, such that
$J_1T_{21}^{[*]}=D_{T_{11}}V^{*}$. The operators $T_m:=A_+-I$ and
$T_M:=I-A_-$ (see proof of (i)) by using \eqref{E:linkoper}, \eqref{Link2}, and \ref{L:DTstar} can be now rewritten as in
\eqref{E:T}. Indeed, observe that (see Theorem \ref{T:1}, \eqref{E:J}, and \eqref{E:Jb})
\begin{equation*}
\begin{split}
J_2S_-^{*}J_-S_-&=J_2VD_{T_{11}}|I+T_{11}|^{[-1/2]}J_-|I+T_{11}|^{[-1/2]}D_{T_{11}}V^{*}\\
&=J_2VD_{T_{11}}(J_1(I+T_{11}))^{[-1]}D_{T_{11}}V^{*}\\
&=J_2VD_{T_{11}}D_{T_{11}}^{[-1]}(I+L_{T_{11}}^*J_1)^{[-1]}D_{T_{11}}J_1D_{T_{11}}V^*\\
&=J_2V(I+L_{T_{11}}^*J_1)^{[-1]}(J_{11}-L^*_{T_{11}}J_{T^*_{11}}L_{T_{11}})V^*\\
&=J_2V(I+L_{T_{11}}^*J_1)^{[-1]}(J_{11}-(L^*_{T_{11}}J_1)^2J_{11})V^*\\
&=J_2V(I+L_{T_{11}}^*J_1)^{[-1]}(I+L^*_{T_{11}}J_1)(I-L^*_{T_{11}}J_1)J_{11}V^*\\
&=J_2V(I-L^*_{T_{11}}J_1)J_{11}V^*,
\end{split}
\end{equation*}
where the third equality follows from \eqref{E:linkoper} and the fourth from \eqref{Link2}.

And similarly for
\begin{equation*}
\begin{split}
J_2S_+^{*}J_+S_+&=J_2VD_{T_{11}}|I-T_{11}|^{[-1/2]}J_+|I-T_{11}|^{[-1/2]}D_{T_{11}}V^{*}\\
&=J_2VD_{T_{11}}(J_1(I-T_{11}))^{[-1]}D_{T_{11}}V^{*}\\
&=J_2VD_{T_{11}}D_{T_{11}}^{[-1]}(I-L_{T_{11}}^*J_1)^{[-1]}D_{T_{11}}J_1D_{T_{11}}V^*\\
&=J_2V(I-L_{T_{11}}^*J_1)^{[-1]}(J_{11}-L^*_{T_{11}}J_{T^*_{11}}L_{T_{11}})V^*\\
&=J_2V(I-L_{T_{11}}^*J_1)^{[-1]}(J_{11}-(L^*_{T_{11}}J_1)^2J_{11})V^*\\
&=J_2V(I-L_{T_{11}}^*J_1)^{[-1]}(I-L^*_{T_{11}}J_1)(I+L^*_{T_{11}}J_1)J_{11}V^*\\
&=J_2V(I+L^*_{T_{11}}J_1)J_{11}V^*,
\end{split}
\end{equation*}
which implies the representations for $T_m$ and $T_M$ in
\eqref{E:T}. Clearly, $T_m$ and $T_M$ are selfadjoint extensions of
$T_1$, which satisfy the equalities
$$
\nu_-[I+T_{m}]=\kappa_-,\quad \nu_-[I-T_{M}]=\kappa_+.
$$
Moreover, it follows from \eqref{E:T} that
\begin{equation}\label{E:6}
T_M-T_m=
\begin{pmatrix}
 0&0\\
 0&2(I-J_2VJ_{11}V^{*})
\end{pmatrix}.
\end{equation}

Now the assumption \eqref{crit} will be used again. Since
$\nu_-[I-T_{1}^{[*]}T_{1}]=\nu_-[I-T_{11}^2]-\nu_-(J_2)$ and
$T_{21}=J_2VD_{T_{11}}$ it follows from Theorem \ref{thmB} that
$V^*\in[\sH_2,\sD_{T_{11}}]$ is $J$-contractive:
$J_2-VJ_{11}V^*\ge0$. Therefore, \eqref{E:6} shows that
$T_M\ge_{J_2} T_m$ and $I+T_M\ge_{J_2} I+T_m$ and hence, in addition to
$I+T_m$, also $I+T_M$ is a solution to the problem $A_{+}^0$ and, in
particular, $\nu_-[I+T_M]=\kappa_-=\nu_-[I+T_m]$. Similarly,
$I-T_M\le_{J_2} I-T_m$ which implies that $I-T_m$ is also a solution to
the problem $A_{-}^0$, in particular,
$\nu_-[I-T_m]=\kappa_+=\nu_-[I-T_M]$. Now by applying Lemma
\ref{L:wT} we get
$$
\nu_-[I-T_m^2]=\kappa-\nu_-(J_2),
$$
$$
\nu_-[I-T_M^2]=\kappa-\nu_-(J_2).
$$
Therefore, $T_m,T_M\in \Ext_{T_1,\kappa}(-1,1)_{J_2}$ which in particular
proves that the condition \eqref{crit} is sufficient for solvability
of the completion problem \eqref{E:A}.

(iv) Observe, that $T\in\Ext_{T_1,\kappa}(-1,1)_{J_2}$ if and only if
$T=T^{[*]}\supset T_1$ and $\nu_-[I\pm T]=\kappa_\mp$. By Theorem
\ref{T:1} this is equivalent to
\begin{equation}\label{E:7}
J_2S_-^{*}J_-S_--I\le_{J_2} T_{22}\le_{J_2} I-J_2S_+^{*}J_+S_+.
\end{equation}
The inequalities \eqref{E:7} are equivalent to \eqref{E:Ext}.

(v) The relations \eqref{E:5} follow from \eqref{E:S pm} and
\eqref{E:T}.
\end{proof}
\end{subsection}
\bigskip

\noindent \textbf{Acknowledgements.} The author thanks his supervisor Seppo Hassi for several detailed discussions on the results of this paper.

\end{document}